\numberwithin{equation}{section}
\newtheorem{proposition}{Proposition}[section]
\newtheorem{theorem}[proposition]{Theorem}
\newtheorem{lemma}[proposition]{Lemma}
\theoremstyle{definition}
\newtheorem{definition}[proposition]{Definition}
\newtheorem{remark}[proposition]{Remark}
\newtheorem{example}[proposition]{Example}
\newtheorem{question}[proposition]{Question}
\definecolor{gr}{rgb}   {0., 0.8, 0. } 
\definecolor{bl}{rgb}   {0., 0.5, 1. } 
\definecolor{mg}{rgb}   {0.7, 0., 0.7}
\newcommand{\fra}{\mathfrak{a}}
\title{Maximal regularity for non-autonomous\\ Robin boundary conditions}
\author{Wolfgang Arendt\,\thanks{Universit\"at Ulm,
Germany}\,
\and Sylvie Monniaux\,\thanks{Aix-Marseille Universit\'e, CNRS, Centrale Marseille, 
I2M, UMR 7373 -- 13453 Marseille, France}\,  
\footnote{partially supported by the ANR project HAB, ANR-12-BS01-0013-03, the Labex 
Archim\`ede (ANR-11-LABX-0033) and the A*MIDEX project (ANR-11-IDEX-0001-02)}}
\date{}
\begin{document}

\maketitle

\begin{abstract}
We consider a non-autonomous Cauchy problem
$$
\dot u(t)+{\mathcal{A}}(t)u(t)=f(t),\quad u(0)=u_0
$$
where ${\mathcal{A}}(t)$ is associated with the form 
$\fra(t;.,.):V\times V\to{\mathbb{C}}$, where $V$ and $H$ are Hilbert spaces
such that $V$ is continuously and densely embedded in $H$. We prove
$H$-maximal regularity, i.e., the weak solution $u$ is actually in 
$H^1(0,T;H)$ (if $u_0\in V$ and $f\in L^2(0,T;H)$) under a new regularity
condition on the form $a$ with respect to time; namely H\"older continuity
with values in an interpolation space. This result is best suited to treat Robin
boundary conditions. The maximal regularity allows one to use fixed point
arguments to some non linear parabolic problems with Robin boundary conditions.
\end{abstract}


\section{Introduction} 
\label{intro}

In the background of this article is a longstanding problem by J.-L. Lions on
non-autono\-mous forms. We give a solution of the problem in a special case
which is most suitable for treating non-autonomous Robin boundary 
conditions. To be more specific we consider a non-autonomous form
$$
\fra:[0,T]\times V\times V\to{\mathbb{C}}
$$
where $V$ is a Hilbert space continuously and densely embedded into another
Hilbert space $H$. We assume that
\begin{align}
|{\mathfrak{a}}(t;u,v)|\le M\|u\|_V\|v\|_V&\qquad(t\in[0,T],u,v\in V)
\label{bdd}\\[4pt]
\Re e\,{\mathfrak{a}}(t;u,u)\ge \delta \|u\|_V^2&\qquad(t\in [0,T],u\in V)
\label{coer}
\end{align}
for some constants $M,\delta >0$, and that $\fra(.;u,v)$ is measurable for 
all $u,v\in V$. Denote by ${\mathcal{A}}(t):V\to V'$ the operator given by
$$
\langle {\mathcal{A}}(t)u,v\rangle=a(t;u,v),\quad v\in V.
$$
The space
$$
MR(V,V'):=H^1(0,T;V')\cap L^2(0,T;V)
$$
is contained in ${\mathscr{C}}([0,T];H)$ and one has the following 
well-posedness result for weak solutions.

\noindent
{\bf Theorem} (Lions).
{\it For all $f\in L^2(0,T;V')$, $u_0\in H$, there exists a unique 
$u\in MR(V,V')$ solution of}
$$
\dot u(t)+{\mathcal{A}}(t)u(t)=f(t),\quad u(0)=u_0
$$

The letters $MR$ are used to refer to ``maximal regularity"; and indeed
one has maximal regularity in $V'$ in the sense that all three terms 
$\dot u$, ${\mathcal{A}}(\cdot)u(\cdot)$ and $f$ occuring in the equation
belong to $L^2(0,T;V')$. However, considering boundary valued problems
one is interested in {\em strong solutions}, i.e., solutions 
$u\in H^1(0,T;H)$ and not only in $H^1(0,T;V')$ (note that 
$H\hookrightarrow V'$ by the natural embedding).

\medskip

\noindent
{\bf Problem.} Given $f\in L^2(0,T;H)$, $u_0\in H$ good enough, under 
which regularity assumptions on the form $\fra$ is u in
$H^1(0,T;H)$?

\medskip

This problem is explicitely formulated by Lions \cite[p.\,98]{Li61} if 
${\mathfrak{a}}(t;v,w)=\overline{{\mathfrak{a}}(t;w,v)}$ for all $v,w\in V$. 
In general, even for $u_0=0$, \eqref{bdd} and \eqref{coer} are not
sufficient for having $u\in H^1(0,T;V)$. This has been shown recently by 
Dier \cite{Di14}. But several positive answers are given by Lions \cite{Li61}. 
More recently it has been shown that actually $u\in H^1(0,T;H)$ for any 
$u_0\in V$ provided
${\mathfrak{a}}(.;v,w)$ is Lipschitz continuous and symmetric (see \cite{ADLO13}
where also a multiplicative perturbation is admitted) or if ${\mathfrak{a}}(.;v,w)$ is 
symmetric and of bounded variations (see Dier \cite{Di14}). Moreover, for $u_0=0$, 
one has $u\in H^1(0,T;H)$ if ${\mathfrak{a}}(.;v,w)$ is H\"older continuous of
order $\alpha>\frac{1}{2}$ for all $u,v\in V$, see Ouhabaz-Spina \cite{OS10}. 
This has been improved by Haak-Ouhabaz \cite{HO14} where the authors 
remove the symmetry condition and allow non-zero initial conditions. The purpose 
of this article is to establish a different case. We consider $0<\gamma<1$ and the 
complex interpolation space $V_\gamma:=[H,V]_\gamma$. We assume
that ${\mathfrak{a}}$ satisfies \eqref{bdd}, \eqref{coer} and
$$
|{\mathfrak{a}}(t,v,w)-{\mathfrak{a}}(s;v,w)|
\le c |t-s|^\alpha\|v\|_{V}\|w\|_{V_\gamma}
$$
for all $v,w\in V$, $t,s\in [0,T]$, where $\alpha>\frac{\gamma}{2}$. Then we 
show that the solution $u$ from Lions' theorem is actually in $H^1(0,T;H)$
whenever $u_0\in V$. In other words, for all $f\in L^2(0,T,H)$, $u_0\in V$
there is a unique 
$$
u\in MR_{{\mathfrak{a}}}(V,H):=\bigl\{u\in H^1(0,T;H)\cap L^2(0,T;V) :
{\mathcal{A}}(\cdot)u(\cdot)\in L^2(0,T;H)\bigr\}
$$
satisfying
$$
\dot u(t)+{\mathcal{A}}(t)u(t)=f(t)\quad t-a.e.
$$
Thus we have maximal regularity in $H$ in the sense that all three terms
$\dot u$, ${\mathcal{A}}(\cdot)u(\cdot)$ and $f$ are in $L^2(0,T;H)$. 
Similar results can be found in a preprint by Ouhabaz \cite{Ou14} which 
was put on the arXiv website a few weeks after the present work.

Moreover, we show that $MR_{\fra}(V,H)\subset {\mathscr{C}}([0,T];V)$.
Our result can be applied to Robin boundary conditions. If $\Omega$ is 
a bounded Lipschitz domain and 
$$
{\mathcal{B}}:[0,T]\to{\mathscr{L}}(L^2(\partial\Omega))
$$
is H\"older continuous of order $\alpha>\frac{1}{4}$ then given 
$u_0\in H^1(\Omega)$, $f\in L^2(0,T;L^2(\Omega))$ there exists a unique
$u\in H^1(0,T;L^2(\Omega))\cap L^2(0,T;H^1(\Omega))$ such that
$\Delta u\in L^2(0,T;L^2(\Omega))$ and
\begin{align*}
\dot u(t)-\Delta u(t)&=f(t)
\\[4pt]
\partial_\nu u(t)+B(t)u(t)_{|_{\partial\Omega}}&=0
\\[4pt]
u(0)&=u_0.
\end{align*}
This in turn can be used to establish solutions of a non linear problem with
non-autonomous boundary conditions, see Section~\ref{section5}.


\section{Forms, interpolation and square root property}
\label{section1}

Throughout  this paper we consider separable complex Hilbert spaces $V$ and $H$ 
with the property
$V\underset{d}\hookrightarrow H$, i.e., $V$ is densely and continuously 
embedded in $H$. Then, as usual, we have
$$
H\underset{d}\hookrightarrow V'
$$
by associating to $u\in H$ the linear mapping $v\mapsto (v|u)$ where 
$(\cdot|\cdot)$ is the scalar product in $H$ and $V'$ the antidual of $V$.
For $\ell\in[0,1]$, we denote by $V_\ell$ the complex interpolation space 
$[H,V]_\ell$. Thus
$$
V\hookrightarrow V_\ell\hookrightarrow H.
$$
Moreover $V_0=H$, $V_1=V$. Then 
$V_\ell':=(V_\ell)'=[V',H]_{1-\ell}$.
In particular
$$
H\hookrightarrow V_\ell'\hookrightarrow V'
$$
and $V_0'=H$, $V_1'=V'$.

Let ${\mathfrak{a}}:V\times V\to {\mathbb{C}}$ be a sesquilinear form which is 
{\em continuous} (satisfying \eqref{bdd}) and {\em coercive} (satisfying \eqref{coer}).
Then $\langle{\mathcal{A}}u,v\rangle:=\fra(u,v)$ 
defines an invertible operator ${\mathcal{A}}\in{\mathscr{L}}(V,V')$. We denote 
by $A$ {\em the part of ${\mathcal{A}}$ in $H$}, i.e., 
$$
D(A):=\bigl\{u\in V:{\mathcal{A}}u\in H\bigr\},\quad Au:={\mathcal{A}}u.
$$
The operators ${\mathcal{A}}$ and $A$ are sectorial. More precisely there exists 
a sector
$$
\Sigma_\theta:=\bigl\{re^{i\varphi}:r>0,|\varphi|<\theta\bigr\}
$$
with $0\le \theta<\frac{\pi}{2}$ such that $\sigma(A)\subset\Sigma_\theta$, 
$\sigma({\mathcal{A}})\subset\Sigma_\theta$ and for all $\ell\in[0,1]$ and
all $\lambda\notin\Sigma_\theta$,
\begin{align}
\|(\lambda{\rm Id}-{\mathcal{A}})^{-1}\|_{{\mathscr{L}}(V_\ell')}&\le 
\frac{c}{1+|\lambda|},
\label{1.1}\\[4pt]
\|(\lambda{\rm Id}-A)^{-1}\|_{{\mathscr{L}}(V_\ell)}&\le \frac{c}{1+|\lambda|},
\label{1.6}\\[4pt]
\|(\lambda{\rm Id}-{\mathcal{A}})^{-1}\|_{{\mathscr{L}}(V_\ell',V)}&\le 
\frac{c}{(1+|\lambda|)^{\frac{1-\ell}{2}}},
\label{1.2}\\[4pt]
\|(\lambda{\rm Id}-{\mathcal{A}})^{-1}\|_{{\mathscr{L}}(V_\ell',H)}&\le 
\frac{c}{(1+|\lambda|)^{1-\frac{\ell}{2}}},
\label{1.3}\\[4pt]
\|(\lambda{\rm Id}-A)^{-1}\|_{{\mathscr{L}}(H,V_\ell)}&\le 
\frac{c}{(1+|\lambda|)^{1-\frac{\ell}{2}}},
\label{1.5}\\[4pt]
\|(\lambda{\rm Id}-{\mathcal{A}})^{-1}\|_{{\mathscr{L}}(V_\ell',V_\ell)}&\le 
\frac{c}{(1+|\lambda|)^{1-\ell}}.
\label{1.11}
\end{align}
The angle $\theta$ and the constant $c$ merely depend on $\delta$, $M$, $\ell$ 
and the embedding constant $c_H$,
\begin{equation}
\label{VinH}
\|v\|_H\le c_H\|v\|_V, \quad v\in V.
\end{equation}
For the proof of the estimates above, we refer to Tanabe \cite[Chapter~2]{Ta79}, 
or Ouhabaz \cite[Theorem~1.52 and Theorem~1.55]{Ou05} 
(see also Arendt, \cite[Theorem~7.1.4 and Theorem~7.1.5]{Ar05})
We fix an angle $\theta<\vartheta<\frac{\pi}{2}$ and denote by $\Gamma$ 
the contour $\Gamma:=\bigl\{re^{\pm i\vartheta}, r\ge 0\bigr\}$ oriented downwards. 
The operator $-A$ generates a holomorphic $C_0$-semigroup 
$(e^{-tA})_{t\ge0}$ on $H$ given by
\begin{equation}
\label{semigroup}
e^{-tA}=\frac{1}{2\pi i}\int_\Gamma 
e^{-\lambda t}(\lambda{\rm Id}-A)^{-1}\,{\rm d}\lambda,
\end{equation}
and there exists a constant $c>0$ such that 
\begin{equation}
\label{anasg}
\|tAe^{-tA}\|_{{\mathscr{L}}(H)}\le c\quad\mbox{for all }t>0.
\end{equation}
Moreover a theorem by De~Simon \cite[Lemma~3.1]{DeS64} shows
$L^2$ maximal regularity in Hilbert spaces, i.e., for 
holomorphic semigroups on Hilbert spaces there exists a constant $c>0$ such 
that for all $f\in L^2(0,\infty;H)$,
\begin{align}
\label{maxreg-aut}
&t\mapsto\int_0^tAe^{(t-s)A}f(s)\,{\rm d}s\in L^2(0,\infty;H)
\nonumber\\[4pt]
\mbox{and}&\quad\Bigl\|t\mapsto\int_0^tAe^{(t-s)A}f(s)\Bigl\|_{L^2(0,\infty;H)}
\le c\,\|f\|_{L^2(0,\infty;H)}.
\end{align}
Moreover
$$
\|e^{-tA}\|_{{\mathscr{L}}(H)}\le ce^{-\varepsilon t}, \quad t\ge 0,
$$
for some $\varepsilon>0$, $c>0$. Also, the operator $-{\mathcal{A}}$ generates an 
exponentially stable holomorphic $C_0$-semigroup $(e^{-t{\mathcal{A}}})_{t\ge0}$ 
on $V'$. By $\fra^*(u,v):=\overline{\fra(v,u)}$ ($u,v\in V$) we define the form 
$\fra^*$ which is adjoint to $\fra$. Then the operator associated with $\fra^*$ on 
$H$ coincides with the adjoint $A^*$ of $A$.
We define the operator $A^{-\frac{1}{2}}\in{\mathscr{L}}(H)$ by
\begin{equation}
\label{sqrtA}
A^{-\frac{1}{2}}u=\frac{1}{\sqrt{\pi}}\int_0^\infty t^{-\frac{1}{2}}e^{-tA}u\,{\rm d}t,
\end{equation}
and we let $D(A^{\frac{1}{2}}):=A^{-\frac{1}{2}}H$. One has 
$D((\mu{\rm Id}+A)^{\frac{1}{2}})=D(A^{\frac{1}{2}})$ for all $\mu\ge 0$ (see 
\cite[Proposition~3.8.2, p.\,165]{ABHN11}). The domain of $A^{\frac{1}{2}}$ 
is of importance since it describes the initial values $u_0$ for which the Cauchy 
problem
$$
\dot u +Au=0,\quad u(0)=u_0
$$
has an $H^1$-solution. In fact, $u(t):=e^{-tA}u_0$ is the mild solution of this 
problem which is defined for all $u_0\in H$. Lions and Magenes 
(see \cite[Th\'eor\`eme~10.1]{LM68}) showed the equivalence
\begin{equation}
\label{maxreg-aut-u0}
u\in H^1(0,T;H) \quad \mbox{if and only if}\quad u_0\in D(A^{\frac{1}{2}}).
\end{equation}

The space $V$ is in general known, it is typically a Sobolev space as 
$H^1(\Omega)$ or $H^1_0(\Omega)$. However, the right space 
$D(A^{\frac{1}{2}})$ for the admissible initial values is in general different from 
$V$. We introduce a name to describe the important property that both spaces 
coincide.

\begin{definition}
\label{sqrt-property}
The form $\fra$ has the {\em square root property} if $D(A^{\frac{1}{2}})=V$.
\end{definition}

We give an abstract criterion for a particular case where the square root 
property holds.

\begin{example}
Assume that $\fra$ can be written in the form $\fra=\fra_1+\fra_2$ where 
$\fra_1:V\times V\to {\mathbb{C}}$ is bounded and symmetric and 
$\fra_2:V\times H\to{\mathbb{C}}$ is bounded. Then $\fra$ has the square root property.
See McIntosh \cite{McI72}.
\end{example}

Not each form has the square root property. The famous solution of the Kato 
square root problem says that elliptic forms describing a second order differential 
operator with measurable coefficients on bounded open sets of ${\mathbb{R}}^N$
with Dirichlet of Neumann boundary conditions have the square root property 
(see \cite{AHLMcIT02} for the case of $\Omega={\mathbb{R}}^N$ and 
\cite{AT03} for the case of strongly Lipschitz domains). 
We will need the following result by J.-L.~Lions \cite[Th\'eor\`eme~5.1]{Li62}.

\begin{lemma}
\label{sqrt-lions}
The form $\fra$ has the square root property if and only if 
$D(A^{\frac{1}{2}})\subset V$ and $D(A^{*{\frac{1}{2}}})\subset V$.
\end{lemma}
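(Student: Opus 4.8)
The statement is an equivalence, and since the square root property means $D(A^{\frac12})=V$ (Definition~\ref{sqrt-property}), one implication is the pair of inclusions $D(A^{\frac12})\subseteq V$ and (the non-trivial) $D(A^{*\frac12})\subseteq V$, while the content of the converse beyond these is $V\subseteq D(A^{\frac12})$. First I would reformulate each inclusion as an a priori estimate on a common core such as $D(A)$: $D(A^{\frac12})\subseteq V$ means $\|w\|_V\lesssim\|A^{\frac12}w\|_H$, $V\subseteq D(A^{\frac12})$ means $\|A^{\frac12}v\|_H\lesssim\|v\|_V$, and similarly for $A^*$; a closed-graph argument then upgrades the estimate to the genuine inclusion. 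Thus the square root property is exactly the two-sided estimate $\|A^{\frac12}v\|_H\approx\|v\|_V$, and the hypotheses of the ``if'' part are the two lower bounds $\|w\|_V\lesssim\|A^{\frac12}w\|_H$ and $\|w\|_V\lesssim\|A^{*\frac12}w\|_H$.

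The analytic engine is a two-sided square function estimate that holds with no extra hypothesis. For $u\in H$ put $w(t)=e^{-tA}u$; differentiating $\|w(t)\|_H^2$ and using \eqref{bdd}--\eqref{coer} gives $\frac{d}{dt}\|w(t)\|_H^2=-2\,\Re\,\fra(w(t),w(t))$, and integrating over $(0,\infty)$ (where $w(\infty)=0$ by the exponential decay of the semigroup) yields $\int_0^\infty\Re\,\fra(e^{-tA}u,e^{-tA}u)\,dt=\tfrac12\|u\|_H^2$. Together with $\delta\|\cdot\|_V^2\le\Re\,\fra(\cdot,\cdot)\le M\|\cdot\|_V^2$ this gives
\[
\int_0^\infty\|e^{-tA}u\|_V^2\,dt\approx\|u\|_H^2,\qquad u\in H,
\]
and the same for $A^*$. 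I would then feed in the hypotheses: since $e^{-tA}u\in D(A^{\frac12})$, the lower bound $\|w\|_V\lesssim\|A^{\frac12}w\|_H$ turns the above into the lower square function estimate $\|u\|_H\lesssim(\int_0^\infty\|A^{\frac12}e^{-tA}u\|_H^2\,dt)^{1/2}$ for $A$, and likewise for $A^*$.

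To close the ``if'' direction I would invoke McIntosh's square function theory (\cite{McI72}): on a Hilbert space a lower square function estimate for $A^*$ is, via the Calder\'on reproducing formula and the duality pairing $(u\mid v)=2\int_0^\infty(A^{\frac12}e^{-tA}u\mid A^{*\frac12}e^{-tA^*}v)\,dt$, equivalent to the matching upper estimate for $A$. Combined with the lower estimate for $A$ obtained above, $A$ then satisfies two-sided square function estimates, i.e.\ $A$ has a bounded $H^\infty$-calculus (and so does $A^*$). The bridge back to $V$ is the factorisation $\fra(u,v)=(A^{\frac12}u\mid A^{*\frac12}v)$ for $u,v$ in the core, obtained from the semigroup representation of $A^{\frac12}$; together with \eqref{bdd}--\eqref{coer} and the bounded $H^\infty$-calculus (which gives $\|A^{\frac12}v\|_H\approx(\int_0^\infty\|Ae^{-tA}v\|_H^2\,dt)^{1/2}$) this upgrades the integral estimates to the pointwise equivalence $\|A^{\frac12}v\|_H\approx\|v\|_V$, hence $V\subseteq D(A^{\frac12})$ and $D(A^{\frac12})=V$. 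For the ``only if'' direction, $D(A^{\frac12})=V$ gives the two-sided estimate $\|A^{\frac12}v\|_H\approx\|v\|_V$ directly, hence the bounded $H^\infty$-calculus for $A$ and therefore for $A^*$; running the displayed estimate and the factorisation for $A^*$ then yields $D(A^{*\frac12})=V$, in particular $D(A^{*\frac12})\subseteq V$, while $D(A^{\frac12})\subseteq V$ is immediate.

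The main obstacle is precisely the non-self-adjointness: in the self-adjoint case $\|A^{\frac12}u\|_H^2=\fra(u,u)$ settles everything, but in general the integral (square function) equivalences do not by themselves yield the pointwise norm equivalence $\|A^{\frac12}v\|_H\approx\|v\|_V$. The crux is the duality step trading a lower estimate for $A^*$ against an upper estimate for $A$, which is exactly why the hypothesis must be imposed on both $A$ and its adjoint; once both square functions are controlled, the factorisation of $\fra$ through $A^{\frac12}$ and $A^{*\frac12}$ does the rest.
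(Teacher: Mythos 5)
The paper itself gives no proof of this lemma: it is quoted from Lions \cite[Th\'eor\`eme~5.1]{Li62}, so your attempt has to be judged against the classical duality argument. Your proof has a genuine gap, and it sits exactly at the decisive step, the ``bridge back to $V$''. All of the square-function and $H^\infty$ material you develop is automatically true for \emph{every} operator associated with a bounded coercive form, with no hypotheses at all, and therefore carries no information about the square root property: such an operator is m-sectorial, hence m-accretive, hence has a bounded $H^\infty$ calculus on a half-plane (unitary dilation of the contraction semigroup plus von Neumann's inequality), and since it is sectorial of angle $<\frac\pi2$, McIntosh's angle-improvement theorem upgrades this to a bounded $H^\infty(\Sigma_\mu)$ calculus with $\mu<\frac\pi2$; consequently the two-sided square function estimates for $A$ and for $A^*$ hold unconditionally. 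Your concluding claim --- that the factorisation $\fra(u,v)=(A^{\frac12}u\mid A^{*\frac12}v)$ together with \eqref{bdd}, \eqref{coer} and the bounded $H^\infty$ calculus ``upgrades the integral estimates to $\|A^{\frac12}v\|_H\approx\|v\|_V$'' --- therefore cannot be correct as stated: McIntosh's counterexample, which the paper recalls just before the lemma (``Not each form has the square root property''), satisfies all of those hypotheses and violates the conclusion. What is missing is precisely how the hypothesis $D(A^{*\frac12})\subset V$ enters the upper bound $\|A^{\frac12}v\|_H\lesssim\|v\|_V$; that step is the entire content of the lemma.

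The missing step is the classical one, and once it is written down the square-function detour becomes superfluous. For $w\in H$ put $y:=A^{*-\frac12}w\in D(A^{*\frac12})\subset V$; the closed graph theorem gives $\|y\|_V\le C\|w\|_H$. Then, for $v\in D(A)$, using $(A^{\frac12})^*=A^{*\frac12}$,
\begin{equation*}
(A^{\frac12}v\mid w)=(A^{\frac12}v\mid A^{*\frac12}y)=(Av\mid y)=\fra(v,y),
\qquad |\fra(v,y)|\le M\|v\|_V\|y\|_V\le MC\|v\|_V\|w\|_H,
\end{equation*}
so $\|A^{\frac12}v\|_H\le MC\|v\|_V$ on $D(A)$; since $D(A)$ is dense in $V$ and $A^{\frac12}$ is closed, this yields $V\subset D(A^{\frac12})$, which together with the hypothesis $D(A^{\frac12})\subset V$ gives equality. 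The converse direction is the same identity plus coercivity: for $y\in D(A^*)$ one has $\delta\|y\|_V^2\le|\fra(y,y)|\le\|A^{\frac12}y\|_H\,\|A^{*\frac12}y\|_H\le c\,\|y\|_V\,\|A^{*\frac12}y\|_H$, hence $\|y\|_V\le(c/\delta)\|A^{*\frac12}y\|_H$, and density plus closedness give $D(A^{*\frac12})\subset V$. Two secondary inaccuracies: \cite{McI72} concerns the comparability of $A^{\frac12}$ and $A^{*\frac12}$, not square functions or the $H^\infty$ calculus (that is McIntosh's 1986 paper); and of the ``equivalence'' you invoke, only the direction (upper estimate for $A^*$) $\Rightarrow$ (lower estimate for $A$) is Cauchy--Schwarz on your pairing, whereas the direction you actually need, (lower for $A^*$) $\Rightarrow$ (upper for $A$), requires a closed-range and Schur-test argument that you do not supply.
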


Note that $e^{-t{\mathcal{A}}}_{|_H}=e^{-tA}$ for all $t\ge0$. We obtain from 
\eqref{1.1}--\eqref{1.11} and \eqref{anasg} the following estimates for the semigroup.
\begin{proposition}
\label{pense-bete}
There exists $c>0$ such that for all $\ell\in[0,1]$ and all $t>0$,
\begin{align}
\|e^{-t{\mathcal{A}}}\|_{{\mathscr{L}}(V_\ell',V)} &\le 
\frac{c}{t^{\frac{1+\ell}{2}}}
\label{1.12}\\[4pt]
\|e^{-tA}\|_{{\mathscr{L}}(V_\ell,V)} &\le \frac{c}{t^{\frac{1-\ell}{2}}}
\label{1.13}\\[4pt]
\|e^{-t{\mathcal{A}}}\|_{{\mathscr{L}}(V_\ell',H)} &\le 
\frac{c}{t^{\frac{\ell}{2}}}
\label{1.14}
\end{align}
\end{proposition}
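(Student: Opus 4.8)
The plan is to derive the three semigroup estimates \eqref{1.12}, \eqref{1.13}, \eqref{1.14} directly from the resolvent bounds \eqref{1.1}--\eqref{1.11} together with the contour representation \eqref{semigroup} of the semigroup. Since $-\mathcal{A}$ generates the holomorphic semigroup $e^{-t\mathcal{A}}$ on $V'$ and its restriction to $H$ is $e^{-tA}$, I expect each bound to follow by inserting the appropriate resolvent estimate into the Cauchy-type integral over $\Gamma$ and carrying out a standard scaling of the contour.

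Concretely, for \eqref{1.12} I would write
$$
e^{-t\mathcal{A}}=\frac{1}{2\pi i}\int_\Gamma e^{-\lambda t}(\lambda{\rm Id}-\mathcal{A})^{-1}\,{\rm d}\lambda
$$
as a map from $V_\ell'$ to $V$, take norms in $\mathscr{L}(V_\ell',V)$, and use \eqref{1.2}, namely $\|(\lambda{\rm Id}-\mathcal{A})^{-1}\|_{\mathscr{L}(V_\ell',V)}\le c(1+|\lambda|)^{-\frac{1-\ell}{2}}$. Parametrizing $\Gamma$ by $\lambda=re^{\pm i\vartheta}$ with $r\ge 0$, the exponential factor contributes $e^{-rt\cos\vartheta}$ with $\cos\vartheta>0$, so the integral reduces to
$$
\int_0^\infty e^{-rt\cos\vartheta}\,\frac{c}{(1+r)^{\frac{1-\ell}{2}}}\,{\rm d}r.
$$
The substitution $r=\sigma/t$ turns this into $t^{-1}$ times an integral whose leading behaviour for small $t$ is governed by $\int_0^\infty e^{-\sigma\cos\vartheta}\sigma^{-\frac{1-\ell}{2}}\,{\rm d}\sigma$, yielding the claimed power $t^{-\frac{1+\ell}{2}}$ (the exponent is $1$ minus the decay rate $\frac{1-\ell}{2}$ of the resolvent). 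The same scaling argument applied with \eqref{1.5} gives \eqref{1.13}, where the resolvent decay $\frac{\ell}{2}$ from $(1+|\lambda|)^{1-\frac{\ell}{2}}$ produces the exponent $\frac{1-\ell}{2}$; and applied with \eqref{1.3} it gives \eqref{1.14}, where the decay $1-\frac{\ell}{2}$ produces $\frac{\ell}{2}$.

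For the uniformity of the constant over $\ell\in[0,1]$ I would note that the interpolation constants in \eqref{1.1}--\eqref{1.11} depend on $\ell$ only through $\delta,M,\ell,c_H$ in a way that is bounded on the compact interval $[0,1]$, and the scaling integrals $\int_0^\infty e^{-\sigma\cos\vartheta}\sigma^{-\beta}\,{\rm d}\sigma$ are finite and bounded for $\beta$ ranging over the relevant subintervals of $[0,1)$; taking a supremum over $\ell$ produces a single $c$. One should also justify that for large $t$ the integrals remain controlled, but this follows from the exponential stability $\|e^{-tA}\|_{\mathscr{L}(H)}\le ce^{-\varepsilon t}$ noted earlier, or simply by splitting $\Gamma$ near the origin from its tails.

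The only genuine subtlety, and the step I expect to be the main obstacle, is the behaviour near $\lambda=0$ of the contour $\Gamma=\{re^{\pm i\vartheta}:r\ge 0\}$: when the resolvent exponent $\frac{1-\ell}{2}$ or $\frac{\ell}{2}$ is less than $1$ the integrand $(1+r)^{-\beta}$ is still integrable at $r=0$, so no deformation of the contour away from the spectrum is needed, but one must verify that the singularity $\sigma^{-\beta}$ arising after the substitution $r=\sigma/t$ is integrable at $\sigma=0$, i.e. that $\beta<1$ in each of the three cases. Since $\beta\in\{\frac{1-\ell}{2},\frac{\ell}{2},1-\frac{\ell}{2}\}$ and $\ell\in[0,1]$, one always has $\beta\le 1$ with equality only at endpoints, so a short check confirms convergence (using that $\sigma^{-1}$ near $\sigma=0$ is tamed by $e^{-\sigma\cos\vartheta}$ only after combining with the factor $t$, which is where the precise exponent matching must be done carefully). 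Once this bookkeeping is in place, the three estimates drop out immediately.
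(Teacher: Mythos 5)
Your overall strategy---the contour representation \eqref{semigroup} combined with the resolvent bounds and the scaling $r=\sigma/t$---is the natural one (the paper gives no written proof, merely asserting that the proposition follows from \eqref{1.1}--\eqref{1.11} and \eqref{anasg}), and your treatment of \eqref{1.12} is correct: \eqref{1.2} has decay exponent $\tfrac{1-\ell}{2}\le\tfrac12$, the bound $(1+\sigma/t)^{-\frac{1-\ell}{2}}\le(t/\sigma)^{\frac{1-\ell}{2}}$ gives $c\,t^{-\frac{1+\ell}{2}}$, and the constant $\int_0^\infty e^{-\sigma\cos\vartheta}\sigma^{-\frac{1-\ell}{2}}\,{\rm d}\sigma$ is uniformly bounded for $\ell\in[0,1]$. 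However, your derivation of \eqref{1.13} fails, for two reasons. First, the spaces do not match: \eqref{1.5} bounds the resolvent as an operator from $H$ to $V_\ell$, whereas \eqref{1.13} concerns $e^{-tA}$ as an operator from $V_\ell$ to $V$; in fact no estimate in the list \eqref{1.1}--\eqref{1.11} controls $\mathscr{L}(V_\ell,V)$ directly. Second, your exponent arithmetic is inconsistent: decay $(1+|\lambda|)^{-(1-\frac{\ell}{2})}$ produces $t^{-\ell/2}$ (for the map $H\to V_\ell$), not $t^{-\frac{1-\ell}{2}}$; the two agree only at $\ell=\tfrac12$. The missing ingredient is an interpolation step that your proposal never performs: either interpolate the resolvent bounds \eqref{1.2} at $\ell=0$ (i.e.\ $\mathscr{L}(H,V)$, decay $\tfrac12$) and \eqref{1.6} at $\ell=1$ (i.e.\ $\mathscr{L}(V)$, decay $1$) to obtain $\|(\lambda{\rm Id}-A)^{-1}\|_{\mathscr{L}(V_\ell,V)}\le c(1+|\lambda|)^{-\frac{1+\ell}{2}}$, or interpolate the semigroup bounds $\|e^{-tA}\|_{\mathscr{L}(H,V)}\le ct^{-1/2}$ and $\|e^{-tA}\|_{\mathscr{L}(V)}\le c$, using $V_\ell=[H,V]_\ell$.

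The second genuine gap is the endpoint behaviour, which you flag but do not resolve; the ``short check'' you describe does not exist. When the resolvent decay exponent equals $1$ (this happens at $\ell=1$ for \eqref{1.13} and at $\ell=0$ for \eqref{1.14}, i.e.\ exactly the assertions that $e^{-tA}$ is bounded on $V$ and on $H$), the substituted integrand $\sigma^{-1}$ is not integrable at $0$, and $e^{-\sigma\cos\vartheta}$ does not help there since it tends to $1$; keeping $(1+r)^{-1}$ un-substituted yields instead $\int_0^\infty e^{-rt\cos\vartheta}(1+r)^{-1}\,{\rm d}r\sim\log(1/t)$ as $t\to0$, a genuine logarithmic loss. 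Moreover $\int_0^\infty e^{-\sigma\cos\vartheta}\sigma^{-\beta}\,{\rm d}\sigma$ diverges like $(1-\beta)^{-1}$ as $\beta\uparrow1$, so your ``supremum over $\ell$'' argument cannot produce the uniform constant the proposition claims. The standard repair is either to deform $\Gamma$ around the origin along an arc of radius $1/t$ (the usual proof that sectoriality of angle $<\tfrac{\pi}{2}$ yields a \emph{bounded} holomorphic semigroup), or, more cleanly, to use your contour computation only for the endpoint estimates with $\beta\le\tfrac12$ (namely $\mathscr{L}(V',V)$ with $t^{-1}$, and $\mathscr{L}(H,V)$, $\mathscr{L}(V',H)$ with $t^{-1/2}$), take boundedness of the semigroup on $H$ (stated in the paper) and on $V$ (from sectoriality on $V$, \eqref{1.6}; this is where the additional ingredient \eqref{anasg} cited by the paper enters), and then obtain all of \eqref{1.12}--\eqref{1.14} with constants uniform in $\ell$ by complex interpolation, using $V_\ell=[H,V]_\ell$ and $V_\ell'=[V',H]_{1-\ell}$. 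With that restructuring your computation for \eqref{1.12} survives verbatim, and \eqref{1.13}, \eqref{1.14} follow with the claimed uniformity.
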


We will consider form perturbations which are continuous on 
$V\times V_\gamma$ and on $V_\gamma\times V$. 
They preserve the square root property.

\begin{proposition}
\label{sqrt-perturbed}
Let ${\mathfrak{a}}_1,{\mathfrak{a}}_2:V\times V\to {\mathbb{C}}$ be two bounded, 
coercive forms. Assume that there exists a constant $c>0$ such that
$$
|{\mathfrak{a}}_1(u,v)-{\mathfrak{a}}_2(u,v)|\le 
c\bigl\|u\|_{V}\|v\|_{V_\gamma}
\quad u,v\in V,
$$
where $0\le \gamma<1$. Then, ${\mathfrak{a}}_1$ has the square root property if, 
and only if, ${\mathfrak{a}}_2$ has it.
\end{proposition}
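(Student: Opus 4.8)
The plan is to reduce everything to Lions' criterion (Lemma~\ref{sqrt-lions}) and then to treat the difference $\mathcal{B}:=\mathcal{A}_1-\mathcal{A}_2$ as a lower–order perturbation living on the interpolation scale. Since the hypothesis $|\mathfrak{a}_1(u,v)-\mathfrak{a}_2(u,v)|\le c\|u\|_V\|v\|_{V_\gamma}$ is symmetric under swapping $\mathfrak{a}_1\leftrightarrow\mathfrak{a}_2$, it suffices to prove one implication; so I would assume $\mathfrak{a}_1$ has the square root property and deduce it for $\mathfrak{a}_2$. By Lemma~\ref{sqrt-lions} this splits into the two inclusions $D(A_2^{\frac12})\subset V$ and $D(A_2^{*\frac{1}{2}})\subset V$, while I may use that $D(A_1^{\frac12})\subset V$ and $D(A_1^{*\frac{1}{2}})\subset V$. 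The first observation is that the perturbation bound says precisely that $\mathcal{B}$ extends to a bounded operator $\mathcal{B}\in\mathscr{L}(V,V_\gamma')$ with $\|\mathcal{B}u\|_{V_\gamma'}\le c\|u\|_V$, whereas its adjoint sits in the interchanged slot, $\mathcal{B}^*\in\mathscr{L}(V_\gamma,V')$.

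The engine is the representation $A_j^{-\frac12}=\frac1\pi\int_0^\infty s^{-\frac12}(s+A_j)^{-1}\,{\rm d}s$, which follows from \eqref{sqrtA} together with the Laplace identity $(s+A_j)^{-1}=\int_0^\infty e^{-st}e^{-tA_j}\,{\rm d}t$. I would then invoke the resolvent identity $(s+\mathcal{A}_2)^{-1}-(s+\mathcal{A}_1)^{-1}=(s+\mathcal{A}_1)^{-1}\mathcal{B}(s+\mathcal{A}_2)^{-1}$ and estimate the right–hand side as an operator from $H$ to $V$ by chaining the bounds of Section~\ref{section1} (applied at $\lambda=-s\notin\Sigma_\theta$): by \eqref{1.2} with $\ell=0$ one has $(s+\mathcal{A}_2)^{-1}\in\mathscr{L}(H,V)$ with norm $\le c(1+s)^{-1/2}$; then $\mathcal{B}\in\mathscr{L}(V,V_\gamma')$; and by \eqref{1.2} with $\ell=\gamma$ one has $(s+\mathcal{A}_1)^{-1}\in\mathscr{L}(V_\gamma',V)$ with norm $\le c(1+s)^{-(1-\gamma)/2}$. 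Multiplying yields $\|(s+\mathcal{A}_2)^{-1}-(s+\mathcal{A}_1)^{-1}\|_{\mathscr{L}(H,V)}\le c(1+s)^{-(1-\gamma/2)}$.

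Inserting this into the integral, the $H\to V$ norm of $A_2^{-\frac12}-A_1^{-\frac12}$ is dominated by $\frac1\pi\int_0^\infty s^{-\frac12}(1+s)^{-(1-\gamma/2)}\,{\rm d}s$, which converges exactly because $\gamma<1$ (the integrand behaves like $s^{-1/2}$ near $0$ and like $s^{-3/2+\gamma/2}$ at infinity). Hence $A_2^{-\frac12}-A_1^{-\frac12}\in\mathscr{L}(H,V)$; since $D(A_1^{\frac12})\subset V$ forces $A_1^{-\frac12}\in\mathscr{L}(H,V)$ by the closed graph theorem (using $V\hookrightarrow H$), I conclude $A_2^{-\frac12}\in\mathscr{L}(H,V)$, i.e.\ $D(A_2^{\frac12})=A_2^{-\frac12}H\subset V$. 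For the adjoint inclusion I would run the identical computation with $\mathcal{A}_j$, $\mathcal{B}$ replaced by $\mathcal{A}_j^*$, $\mathcal{B}^*$, but route the estimates differently because $\mathcal{B}^*$ maps $V_\gamma\to V'$: using \eqref{1.5} with $\ell=\gamma$ to get $(s+\mathcal{A}_2^*)^{-1}\in\mathscr{L}(H,V_\gamma)$ with norm $\le c(1+s)^{-(1-\gamma/2)}$, and \eqref{1.2} with $\ell=1$ to get $(s+\mathcal{A}_1^*)^{-1}\in\mathscr{L}(V',V)$ bounded, one obtains the same decay $(1+s)^{-(1-\gamma/2)}$ and hence the same convergent integral, giving $D(A_2^{*\frac{1}{2}})\subset V$. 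I expect the main obstacle to be precisely this bookkeeping: choosing the interpolation indices $\ell$ in \eqref{1.2} and \eqref{1.5} so that the perturbation lands in the correct dual space \emph{and} the resulting power of $(1+s)$ is integrable against $s^{-1/2}$. The threshold $\gamma<1$ is exactly what guarantees integrability, and the asymmetry of the perturbation bound is what forces genuinely different routings for the two inclusions even though both end with the exponent $1-\gamma/2$.
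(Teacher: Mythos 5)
Your proposal is correct and follows essentially the same route as the paper: reduction to Lions' criterion (Lemma~\ref{sqrt-lions}), the resolvent identity applied to the perturbation $\mathcal{A}_1-\mathcal{A}_2\in\mathscr{L}(V,V_\gamma')$, the same interpolation estimates with the same exponent $(1+s)^{-(1-\gamma/2)}$ (your \eqref{1.2} with $\ell=0$ is the paper's \eqref{1.5} with $\ell=1$), and the identical asymmetric rerouting through $\mathcal{A}_j^*$, $\mathcal{B}^*\in\mathscr{L}(V_\gamma,V')$ for the adjoint inclusion. The only difference is a mild streamlining: you use the real-line formula $A^{-1/2}=\tfrac1\pi\int_0^\infty s^{-1/2}(s+A)^{-1}\,{\rm d}s$, which collapses the paper's double integral (over $\sigma$ and the contour $\Gamma$, obtained by inserting \eqref{semigroup} into \eqref{sqrtA}) into a single convergent integral.
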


In the following proof the constant $c>0$ will vary from one line to the other but 
does not depend on the variables to be estimated. We keep this convention 
throughout the paper.

\begin{proof}
By hypothesis we have $A_1^{-1/2}H\subset V$. We show that 
$(A_1^{-1/2}-A_2^{-1/2})H\subset V$. Let $u\in H$. Then
\begin{align*}
A_1^{-1/2}u-A_2^{-1/2}u&=
\frac{1}{\sqrt{\pi}}\int_0^\infty\frac{1}{\sqrt{\sigma}}(e^{-\sigma A_1}u-
e^{-\sigma A_2}u)\,{\rm d}\sigma
\\[4pt]
&=\frac{1}{\sqrt{\pi}}\int_0^\infty\frac{1}{\sqrt{\sigma}}\,\frac{1}{2\pi i}
\int_\Gamma e^{-\sigma\lambda}
\bigl((\lambda{\rm Id}-A_1)^{-1}u-(\lambda{\rm Id}-A_2)^{-1}u
\bigr)\,{\rm d}\lambda\,{\rm d}\sigma
\\[4pt]
&=\frac{1}{\sqrt{\pi}}\int_0^\infty\frac{1}{\sqrt{\sigma}}\,\frac{1}{2\pi i}
\int_\Gamma e^{-\sigma\lambda}
(\lambda{\rm Id}-{\mathcal{A}}_1)^{-1}({\mathcal{A}}_1-{\mathcal{A}}_2)
(\lambda{\rm Id}-A_2)^{-1}u\,{\rm d}\lambda\,{\rm d}\sigma.
\end{align*}
Since ${\mathcal{A}}_1-{\mathcal{A}}_2\in{\mathscr{L}}(V,V_\gamma')$ 
and \eqref{1.5} for $\ell=1$ and \eqref{1.2} for $\ell=\gamma$
$$
\begin{array}{lrcll}
&\|(\lambda{\rm Id}-A_2)^{-1}\|_{{\mathscr{L}}(H,V)}&\le& 
\frac{c}{(1+|\lambda|)^{1/2}}&
\\[4pt]
\mbox{and }&&&&\quad \mbox{ for all }\lambda\in\Gamma
\\[4pt]
&\|(\lambda{\rm Id}-{\mathcal{A}}_1)^{-1}\|_{{\mathscr{L}}(V_\gamma',V)}
&\le& \frac{c}{(1+|\lambda|)^{\frac{1-\gamma}{2}}}& 
\end{array}
$$
we see that
$(\lambda{\rm Id}-{\mathcal{A}}_1)^{-1}({\mathcal{A}}_1-{\mathcal{A}}_2)
(\lambda{\rm Id}-A_2)^{-1}u \in V$ and the integral converges in $V$. In fact,
\begin{equation}
\label{sqrtA1-sqrtA2}
\bigl\|(\lambda{\rm Id}-{\mathcal{A}}_1)^{-1}({\mathcal{A}}_1-{\mathcal{A}}_2)
(\lambda{\rm Id}-A_2)^{-1}u\bigr\|_V
\le \frac{c}{(1+|\lambda|)^{1-\gamma/2}}\,\|u\|_H
\end{equation}
and therefore
\begin{align*}
\|A_1^{-1/2}u-A_2^{-1/2}u\|_V&\le 
c\int_0^\infty\frac{1}{\sqrt{\sigma}}\Bigl|\int_\Gamma e^{-\sigma\Re e\,\lambda}
\,\frac{1}{(1+|\lambda|)^{\frac{1-\gamma}{2}}}\,
\frac{1}{(1+|\lambda|)^{1/2}}\|u\|_H
\,{\rm d}|\lambda|\Bigr|\,{\rm d}\sigma
\\[4pt]
&\le
c \Bigl(\int_0^\infty\frac{1}{\sqrt{\sigma}}
\int_0^\infty e^{-\sigma r\cos \vartheta}\,\frac{1}{(1+r)^{1-\gamma/2}}
\,{\rm d}r\,{\rm d}\sigma\Bigr)\|u\|_H
\\[4pt]
&\le c\|u\|_H
\int_0^\infty\Bigl(\int_0^\infty\frac{1}{\sqrt{\sigma}}\,
e^{-\sigma r\cos \vartheta}\,{\rm d}\sigma\Bigr)
\,\frac{1}{(1+r)^{1-\gamma/2}}\,{\rm d}r
\\[4pt]
&\le c\|u\|_H
\int_0^\infty\Bigl(\int_0^\infty\frac{\sqrt{r}}{\sqrt{s}}\,e^{-s\cos \vartheta}\,
\frac{1}{r}\,{\rm d}s\Bigr)
\,\frac{1}{(1+r)^{1-\gamma/2}}\,{\rm d}r
\\[4pt]
&\le c\|u\|_H \int_0^\infty \frac{1}{\sqrt{r}}\,\frac{1}{(1+r)^{1-\gamma/2}}
\,{\rm d}r \ \le\ c\|u\|_H.
\end{align*}
Thus the claim is proved and $D(A_2^{\frac{1}{2}})\subset V$. Applying this result 
to $({\mathfrak{a}}_1^*,{\mathfrak{a}}_2^*)$ instead of 
$({\mathfrak{a}}_1,{\mathfrak{a}}_2)$ we find that 
$D(A_2^{*\frac{1}{2}})\subset V$. Indeed we have 
${\mathcal{A}}_1^*-{\mathcal{A}}_2^*\in{\mathscr{L}}(V_\gamma,V')$. 
Thanks to \eqref{1.2} for $\ell=1$ and \eqref{1.5} for $\ell=\gamma$, 
\eqref{sqrtA1-sqrtA2} becomes
$$
\bigl\|(\lambda{\rm Id}-{\mathcal{A}}_1^*)^{-1}({\mathcal{A}}_1^*-{\mathcal{A}}_2^*)
(\lambda{\rm Id}-A_2^*)^{-1}u\bigr\|_V
\le \frac{c}{(1+|\lambda|)^{1-\gamma/2}}\,\|u\|_H
$$
It follows from Lemma~\ref{sqrt-lions} that ${\mathfrak{a}}_2$ has the 
square root property.
\end{proof}


\section{Non-autonomous forms}
\label{section2}

In this section, we consider a time-dependent form $\fra$. Let $V$, $H$ be 
separable complex Hilbert spaces. Let $T>0$ and let
\begin{align}
&{\mathfrak{a}}(t;\cdot,\cdot):\times V\times V\to {\mathbb{C}}\quad
\mbox{be a sesquilinear form for all $t\in[0,T]$ satisfying}
\nonumber\\[4pt]
&\eqref{bdd} \mbox{ ({\em boundedness}) } \mbox{ and }\eqref{coer}
\mbox{ ({\em coercivity})}
\label{bddcoer}\\[4pt]
&{\mathfrak{a}}(\cdot,;u,v):[0,T]\to{\mathbb{C}}\mbox{ is measurable for all }
u,v\in V. 
\label{measurable}
\end{align}
Then for each $t\in [0,T]$ we consider the operator 
${\mathcal{A}}(t)$ on $V'$ which is associated with $\fra(t;\cdot,\cdot)$ and we 
denote by $A(t)$ the part of ${\mathcal{A}}(t)$ in $H$. 
A classical theorem of Lions (see \cite[Th\'eor\`eme~1 p.\,619, Th\'eor\`eme~2 
p.\,620, Chap.\.XVIII~\S3]{DL88}, \cite[Proposition~2.3, Chap. III.2]{Sho97}) 
establishes well-posedness and maximal regularity in $V'$ of the Cauchy problem
\begin{equation}
\label{CP}
\left\{\begin{array}{rcl}
\dot u(t)+{\mathcal{A}}(t)u(t)&=&f(t)
\\[4pt]
u(0)&=&u_0.
\end{array}\right.
\end{equation}
More precisely, we let
$$
MR(V,V'):=H^1(0,T;V')\cap L^2(0,T;V).
$$
Then $MR(V,V')\subset{\mathscr{C}}([0,T];H)$ and the following holds.

\begin{theorem}[Lions]
\label{lions}
Let $f\in L^2(0,T;V')$, $u_0\in H$. Then there exists a unique solution 
$u\in MR(V,V')$ of \eqref{CP}.
\end{theorem}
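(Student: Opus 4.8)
The plan is to prove Lions' theorem (Theorem~\ref{lions}) by the classical Galerkin method combined with an a priori energy estimate, treating existence and uniqueness separately.

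\medskip

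\noindent\textbf{Uniqueness.} First I would dispose of uniqueness, which is the easy part and also guides the energy estimate. Suppose $u\in MR(V,V')$ solves \eqref{CP} with $f=0$ and $u_0=0$. Since $u\in MR(V,V')\subset\mathscr{C}([0,T];H)$, the function $t\mapsto\|u(t)\|_H^2$ is absolutely continuous and $\tfrac{d}{dt}\|u(t)\|_H^2=2\,\Re e\,\langle\dot u(t),u(t)\rangle$ (this is the integration-by-parts / product rule valid precisely on $MR(V,V')$). Pairing the equation $\dot u(t)+\mathcal{A}(t)u(t)=0$ with $u(t)$ and taking real parts gives
\[
\tfrac12\tfrac{d}{dt}\|u(t)\|_H^2+\Re e\,\fra(t;u(t),u(t))=0.
\]
By coercivity \eqref{coer}, $\Re e\,\fra(t;u(t),u(t))\ge\delta\|u(t)\|_V^2\ge0$, so $\|u(t)\|_H^2$ is non-increasing and starts at $0$, forcing $u\equiv0$.

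\medskip

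\noindent\textbf{A priori estimate and existence via Galerkin.} For existence I would fix an orthonormal basis $(e_n)_{n\ge1}$ of $H$ consisting of vectors in $V$ (possible since $V$ is densely embedded and $H$ is separable), let $V_n:=\mathrm{span}\{e_1,\dots,e_n\}$, and seek $u_n(t)=\sum_{k=1}^n c_k^n(t)e_k$ solving the finite-dimensional projected problem
\[
\langle\dot u_n(t),e_j\rangle+\fra(t;u_n(t),e_j)=\langle f(t),e_j\rangle,\qquad c^n(0)=P_nu_0,
\]
for $j=1,\dots,n$, where $P_n$ is the orthogonal projection onto $V_n$. Because $\fra(\cdot;e_k,e_j)$ is measurable and bounded by \eqref{bdd}, this is a linear ODE system with $L^\infty$ coefficients, so Carathéodory's theorem yields a unique absolutely continuous solution $c^n$. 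The crucial step is the uniform a priori bound: testing with $u_n(t)$ itself and using \eqref{coer} together with Young's inequality on the right-hand side $|\langle f,u_n\rangle|\le\|f\|_{V'}\|u_n\|_V\le\tfrac{1}{2\delta}\|f\|_{V'}^2+\tfrac{\delta}{2}\|u_n\|_V^2$ gives, after integrating in time,
\[
\|u_n(t)\|_H^2+\delta\int_0^t\|u_n(s)\|_V^2\,ds\le\|u_0\|_H^2+\tfrac{1}{\delta}\int_0^T\|f(s)\|_{V'}^2\,ds.
\]
This bounds $(u_n)$ uniformly in $L^\infty(0,T;H)\cap L^2(0,T;V)$; the boundedness \eqref{bdd} then bounds $(\mathcal{A}(\cdot)u_n)$ in $L^2(0,T;V')$, and from the equation $(\dot u_n)$ is bounded in $L^2(0,T;V')$, i.e.\ $(u_n)$ is bounded in $MR(V,V')$.

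\medskip

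\noindent\textbf{Passage to the limit.} Since $MR(V,V')$ is a Hilbert space, by weak compactness I would extract a subsequence $u_{n}\rightharpoonup u$ weakly in $MR(V,V')$, hence $u_{n}\rightharpoonup u$ in $L^2(0,T;V)$ and $\dot u_{n}\rightharpoonup\dot u$ in $L^2(0,T;V')$. Fixing a test function of the form $\varphi(t)e_j$ with $\varphi\in\mathscr{C}_c^\infty(0,T)$, one passes to the limit in the weak formulation using linearity and the measurability/boundedness of $\fra$; since $\bigcup_n V_n$ is dense in $V$, the limiting identity $\dot u+\mathcal{A}(\cdot)u=f$ holds in $L^2(0,T;V')$. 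The initial condition $u(0)=u_0$ is recovered by an integration-by-parts against a test function $\varphi$ with $\varphi(T)=0$, $\varphi(0)=1$, comparing the limit of the $u_n$-identity (where $u_n(0)=P_nu_0\to u_0$ in $H$) with the identity for $u$; here one uses that the trace map $MR(V,V')\to H$, $v\mapsto v(0)$, is continuous, so $u_n(0)\rightharpoonup u(0)$ in $H$. The main obstacle is the justification of the integration-by-parts formula $\tfrac{d}{dt}\|u\|_H^2=2\Re e\,\langle\dot u,u\rangle$ and the continuity of the trace, i.e.\ the embedding $MR(V,V')\subset\mathscr{C}([0,T];H)$ asserted just before the theorem; this rests on the density of $V$ in $H$ and a mollification argument, and it is what makes both the uniqueness proof and the recovery of the initial datum rigorous.
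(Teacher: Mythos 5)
The paper itself does not prove Theorem~\ref{lions}: it quotes it as a classical result, with references to Dautray--Lions and Showalter. Your Galerkin strategy is exactly the proof in those references, and most of your steps are sound (Carath\'eodory existence for the projected system, the energy estimate, uniqueness via the product rule on $MR(V,V')$, which you correctly flag as resting on the embedding $MR(V,V')\subset\mathscr{C}([0,T];H)$). But there is one genuine gap: the claim that ``from the equation $(\dot u_n)$ is bounded in $L^2(0,T;V')$''. The Galerkin relation only determines the components of $\dot u_n(t)$ along $e_1,\dots,e_n$; intrinsically it reads $\dot u_n=\tilde P_n\bigl(f-\mathcal{A}(\cdot)u_n\bigr)$, where $\tilde P_ng:=\sum_{j\le n}\langle g,e_j\rangle e_j$ extends the $H$-orthogonal projection $P_n$ to $V'$. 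Since $\dot u_n(t)\in V_n$, for $v\in V$ one has $(\dot u_n(t)|v)=(\dot u_n(t)|P_nv)$, whence only
\[
\|\dot u_n(t)\|_{V'}\le\bigl(\|f(t)\|_{V'}+M\|u_n(t)\|_V\bigr)\,\|P_n\|_{\mathscr{L}(V)}.
\]
For an arbitrary $H$-orthonormal basis contained in $V$, the projections $P_n$ are \emph{not} uniformly bounded as operators on $V$ (equivalently, $\tilde P_n$ is not uniformly bounded on $V'$): $H$-orthogonal projections onto finite-dimensional subspaces of $V$ can have arbitrarily large $V$-norm. Consequently the boundedness of $(u_n)$ in $MR(V,V')$, the extraction of a weak limit in $MR(V,V')$, and the weak convergence of the traces $u_n(0)\rightharpoonup u(0)$ --- all of which your limit passage uses --- are unjustified as stated.

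The standard repair keeps only your bounds in $L^2(0,T;V)\cap L^\infty(0,T;H)$ and never estimates $\dot u_n$: test the Galerkin identity against $\varphi e_j$ with $\varphi\in\mathscr{C}^1([0,T])$, $\varphi(T)=0$, and integrate by parts in time at the level of $u_n$ (finite-dimensional calculus), so the derivative falls on $\varphi$:
\[
-\int_0^T(u_n(t)|e_j)\,\varphi'(t)\,{\rm d}t+\int_0^T\fra(t;u_n(t),e_j)\,\varphi(t)\,{\rm d}t
=\int_0^T\langle f(t),e_j\rangle\,\varphi(t)\,{\rm d}t+(P_nu_0|e_j)\,\varphi(0).
\]
Weak convergence of $u_n$ in $L^2(0,T;V)$ suffices to pass to the limit in every term. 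Taking first $\varphi\in\mathscr{C}^\infty_c(0,T)$ and using that $\bigcup_nV_n$ is dense in $V$ gives $\dot u=f-\mathcal{A}(\cdot)u$ in the sense of $V'$-valued distributions; since the right-hand side lies in $L^2(0,T;V')$, one concludes \emph{a posteriori} that $u\in MR(V,V')$. Then taking $\varphi(0)=1$ and comparing the boundary terms $(u_0|e_j)$ and $(u(0)|e_j)$ yields $u(0)=u_0$, with no weak continuity of the trace needed. (Alternatively, a basis orthogonal in both $H$ and $V$ --- available, e.g., when $V\hookrightarrow H$ is compact --- gives $\|P_n\|_{\mathscr{L}(V)}=1$ and rescues your argument verbatim, but such a basis need not exist in general.) A final minor point: an arbitrary $H$-orthonormal basis made of vectors of $V$ need not have $\bigcup_nV_n$ dense in $V$; you must construct $(e_n)$ by Gram--Schmidt in $H$ from a countable subset dense in $V$, which is what your density claim at the end requires.
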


The operator ${\mathcal{A}}(t)$ is not the real object of interest if one considers 
boundary value problems (see Section~\ref{section5}), it is rather its part in $H$ 
which realizes the boundary conditions. So the following question is of great 
importance.

\begin{question}
Assume that $f\in L^2(0,T;H)$ and $u_0\in V$. Does it follow that the solution 
$u\in MR(V,V')$ of \eqref{CP} is actually in $H^1(0,T;H)$?
\end{question}

In that case, since $u$ is a solution, $u(t)\in D(A(t))$ $a.e.$ and 
$\dot u(t)+A(t)u(t)=f(t)$.
We have seen that we have to impose at least that $\fra(0;\cdot,\cdot)$ has the
square root property (since otherwise, even for $f\equiv 0$ and even for 
$A(t)\equiv A(0)$ there exists a counterexample). Our aim is to give a positive 
answer to the question if $\fra$ satisfies some further regularity in time.

\begin{definition}
The form $\fra$ (or Problem \eqref{CP}) satisfies {\em maximal regularity in $H$} 
if for each $f\in L^2(0,T;H)$ and for each $u_0\in V$, the solution $u\in MR(V,V')$ 
of \eqref{CP} is actually in $H^1(0,T;H)$.
\end{definition}

The problem \eqref{CP} is invariant under shifting the operator by a scalar 
operator as the following proposition shows.

\begin{proposition}
\label{shift-inv}
Let $\mu\in {\mathbb{R}}$.
\begin{enumerate}
\item
For each $f\in L^2(0,T;V')$ and $u_0\in H$ there is a unique $u\in MR(V,V')$ such 
that
\begin{equation}
\label{CPshifted}
\left\{\begin{array}{rcl}
\dot u(t)+{\mathcal{A}}(t)u(t)+\mu u(t)&=&f(t)\quad a.e.,
\\[4pt]
u(0)&=&u_0.
\end{array}\right.
\end{equation}
\item
If problem \eqref{CP} has maximal regularity in $H$ and if $u_0\in V$ and 
$f\in L^2(0,T;H)$, then this solution $u$ of \eqref{CPshifted} belongs to 
$H^1(0,T;H)$.
\end{enumerate}
\end{proposition}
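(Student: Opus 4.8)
The plan is to reduce Proposition~\ref{shift-inv} to Lions' theorem (Theorem~\ref{lions}) and to the definition of maximal regularity in $H$ by absorbing the shift $\mu\,\mathrm{Id}$ into the form. Observe first that if we set $\tilde{\mathfrak{a}}(t;u,v):=\mathfrak{a}(t;u,v)+\mu(u|v)$, then $\tilde{\mathfrak{a}}(t;\cdot,\cdot)$ is again a sesquilinear form on $V\times V$, and the associated operator on $V'$ is exactly $\tilde{\mathcal{A}}(t)=\mathcal{A}(t)+\mu\,\mathrm{Id}$. Thus \eqref{CPshifted} is precisely the Cauchy problem \eqref{CP} for the perturbed form $\tilde{\mathfrak{a}}$.

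For part (1), I would verify that $\tilde{\mathfrak{a}}$ satisfies the hypotheses of Theorem~\ref{lions}, namely boundedness, coercivity and measurability. Boundedness \eqref{bdd} is immediate since $|(u|v)|\le c_H^2\|u\|_V\|v\|_V$ by \eqref{VinH}, so $|\tilde{\mathfrak{a}}(t;u,v)|\le(M+|\mu|c_H^2)\|u\|_V\|v\|_V$. Measurability \eqref{measurable} is clear because $t\mapsto(u|v)$ is constant, hence $\tilde{\mathfrak{a}}(\cdot;u,v)$ is measurable as a sum of measurable functions. The one point requiring care is coercivity \eqref{coer}: for $\mu\ge0$ we have $\Re e\,\tilde{\mathfrak{a}}(t;u,u)=\Re e\,\mathfrak{a}(t;u,u)+\mu\|u\|_H^2\ge\delta\|u\|_V^2$, but for $\mu<0$ the extra term $\mu\|u\|_H^2$ is negative. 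Here I would use the Lions framework more carefully: Theorem~\ref{lions} only requires a G{\aa}rding-type coercivity (coercivity up to a shift), or equivalently one notes that well-posedness in $MR(V,V')$ is itself shift-invariant — if $u$ solves the problem for $\tilde{\mathcal{A}}$ one checks directly that $w(t):=e^{\mu t}u(t)$ solves the Cauchy problem for the genuinely coercive form $\mathfrak{a}$ with data $e^{\mu t}f(t)$ and the same initial value. Since $w\mapsto u$, $w=e^{\mu\cdot}u$, is an isomorphism of $MR(V,V')$ (multiplication by the smooth scalar $e^{\mu t}$ preserves $H^1(0,T;V')\cap L^2(0,T;V)$), existence and uniqueness for \eqref{CPshifted} follow from Theorem~\ref{lions} applied to $\mathfrak{a}$.

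For part (2), the same multiplicative substitution $u(t)=e^{-\mu t}w(t)$ does the work. If $u_0\in V$ and $f\in L^2(0,T;H)$, then the transformed data $\tilde f(t):=e^{\mu t}f(t)$ is again in $L^2(0,T;H)$ and the transformed initial value $e^{0}u_0=u_0$ is in $V$. By hypothesis Problem \eqref{CP} has maximal regularity in $H$, so the solution $w$ of \eqref{CP} with data $(\tilde f,u_0)$ lies in $H^1(0,T;H)$. Since multiplication by $e^{-\mu t}$ maps $H^1(0,T;H)$ into itself (the factor and its derivative are bounded on $[0,T]$), we conclude $u=e^{-\mu\cdot}w\in H^1(0,T;H)$, which is the claim.

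The only genuinely delicate point is the interplay between the shift and coercivity in part (1); everything else is a routine check that the elementary scalar substitution respects the two regularity classes $MR(V,V')$ and $H^1(0,T;H)$. I expect the main obstacle to be handling $\mu<0$ cleanly, and I would resolve it exactly by the exponential rescaling above rather than by trying to re-prove coercivity directly, since the rescaling converts the shifted problem into a genuinely coercive one to which the already-established results apply verbatim.
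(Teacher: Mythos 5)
Your proof is correct and follows essentially the same route as the paper: both reduce \eqref{CPshifted} to \eqref{CP} via the exponential rescaling $u(t)=e^{-\mu t}v(t)$ with data $e^{\mu\cdot}f(\cdot)$, invoking Theorem~\ref{lions} for existence and uniqueness in part (1) and the maximal regularity hypothesis for part (2). Your preliminary discussion of absorbing $\mu\,\mathrm{Id}$ into the form (and the coercivity obstruction for $\mu<0$) is exactly the issue the rescaling is designed to avoid, and you correctly settle on the rescaling, which is the paper's argument.
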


\begin{proof}
\begin{enumerate}
\item
Let $v$ be the unique solution in $MR(V,V')$ of
$$
\left\{\begin{array}{rcl}
\dot v(t)+{\mathcal{A}}(t)v(t)&=&e^{\mu t}f(t)\quad a.e.
\\[4pt]
v(0)&=&u_0
\end{array}\right.
$$
and let $u$ be defined by $u(t)=e^{-\mu t}v(t)$, $t\in [0,T]$. It is immediate that 
$u\in MR(V,V')$,
$u(0)=u_0$ and
\begin{align*}
\dot u(t)&=-\mu e^{-\mu t}v(t)+e^{-\mu t}\dot v(t)
\\[4pt]
&=-\mu u(t)+e^{-\mu t}\bigl(-{\mathcal{A}}(t)v(t)+f(t)e^{\mu t}\bigr)
\\[4pt]
&=-\mu u(t)-{\mathcal{A}}(t)u(t)+f(t),\quad a.e.\ t\in[0,T]
\end{align*}
which proves that $u$ satisfies \eqref{CPshifted}. Assume now that 
\eqref{CPshifted} admits two solutions in $MR(V,V')$ $u_1$ and $u_2$. Then
$v_1(t)=e^{\mu t}u_1(t)$ and $v_2(t)=e^{\mu t}v_2(t)$ define two solutions of 
\eqref{CP} in $MR(V,V')$ with initial value $u_0$ and $e^{\mu\cdot}f(\cdot)$ instead
of $f$. Therefore they coincide by Lions' Theorem~\ref{lions}.
\item
Assume now that $u$ is a solution in $MR(V,V')$ of \eqref{CPshifted}. 
Let $v:t\mapsto e^{\mu t}u(t)$; $v$ is the unique solution of \eqref{CP} in 
$MR(V,V')$ with $g=e^{\mu\cdot}f(\cdot)$ instead of $f$.
Since problem~\eqref{CP} has the maximal regularity property and 
$g\in L^2(0,T;H)$, $v(0)=u_0\in V$, the solution $v$ belongs to $H^1(0,T;H)$ 
and $t\mapsto {\mathcal{A}}(t)v(t)\in L^2(0,T;H)$.
This proves that $u:t\mapsto e^{-\mu t}v(t)$ also belongs to $H^1(0,T;H)$ (since
$\dot u(t)=\mu e^{-\mu t}v(t)+e^{-\mu t}\dot v(t)$) and 
$t\mapsto {\mathcal{A}}(t)u(t)=e^{-\mu t}{\mathcal{A}}(t)v(t) \in L^2(0,T;H)$.
\end{enumerate}
\end{proof}

\noindent
Finally, we establish a representation formula of the solution 
$u\in MR(V,V')$ of \eqref{CP}.

\begin{proposition}
\label{TLJ}
\begin{enumerate}
\item
Let $f\in L^2(0,T;V')$, $u_0\in H$. Let $u\in MR(V,V')$ be the solution 
of \eqref{CP}. Then
\begin{align}
\label{FTLJ}
&u(t)=e^{-tA(t)}u_0+\int_{0}^t e^{-(t-s){\mathcal{A}}(t)}f(s)\,{\rm d}s+
\int_{0}^t e^{-(t-s){\mathcal{A}}(t)}\bigl({\mathcal{A}}(t)-{\mathcal{A}}(s)\bigr)
u(s)\,{\rm d}s,
\nonumber\\[4pt]
&\mbox{for all $t\in[0,T]$.}
\end{align}
\item
Moreover, there is only one $u\in MR(V,V')$ satisfying this identity if we assume 
in addition that $t\mapsto {\mathcal{A}}(t)\in{\mathscr{L}}(V,V')$ is Dini-continuous, 
i.e., admits a modulus of continuity $\omega$ in the operator norm with the property 
that $t\mapsto \frac{1}{t}\,\omega(t)\in L^1(0,T)$.
\end{enumerate}
\end{proposition}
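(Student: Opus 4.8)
The plan is to handle the two parts separately: I derive the representation \eqref{FTLJ} by \emph{freezing} the coefficient and invoking the autonomous variation-of-constants formula, and I prove the uniqueness in part (2) by turning \eqref{FTLJ} into a homogeneous Volterra equation whose kernel is integrable precisely because of the Dini hypothesis.

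For part (1), fix $t\in(0,T]$ and rewrite the identity $\dot u(s)+\mathcal{A}(s)u(s)=f(s)$ as an \emph{autonomous} problem driven by the frozen operator $\mathcal{A}(t)$,
$$
\dot u(s)+\mathcal{A}(t)u(s)=g_t(s):=f(s)+\bigl(\mathcal{A}(t)-\mathcal{A}(s)\bigr)u(s),\qquad s\in(0,t).
$$
Since $\mathcal{A}(t)-\mathcal{A}(s)\in\mathscr{L}(V,V')$ is uniformly bounded by \eqref{bdd} and $u\in L^2(0,T;V)$, $f\in L^2(0,T;V')$, the right-hand side $g_t$ lies in $L^2(0,t;V')$. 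The form $\mathfrak{a}(t;\cdot,\cdot)$ is a fixed bounded and coercive form, so $u$ is the unique $MR(V,V')$-solution of this autonomous problem by the autonomous case of Theorem~\ref{lions}; hence $u$ coincides with the mild solution furnished by the variation-of-constants formula for the analytic semigroup $(e^{-\sigma\mathcal{A}(t)})_{\sigma\ge0}$,
$$
u(s)=e^{-s\mathcal{A}(t)}u_0+\int_0^s e^{-(s-r)\mathcal{A}(t)}g_t(r)\,{\rm d}r.
$$
Evaluating at $s=t$, using $e^{-t\mathcal{A}(t)}_{|_H}=e^{-tA(t)}$ on $u_0\in H$, and splitting the integral according to the two terms of $g_t$ yields exactly \eqref{FTLJ}.

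To make the underlying cancellation transparent one may instead differentiate $\phi(s):=e^{-(t-s)\mathcal{A}(t)}u(s)$ on $(0,t)$: the product rule gives, in $V'$, $\phi'(s)=e^{-(t-s)\mathcal{A}(t)}\bigl(\mathcal{A}(t)u(s)+\dot u(s)\bigr)=e^{-(t-s)\mathcal{A}(t)}\bigl((\mathcal{A}(t)-\mathcal{A}(s))u(s)+f(s)\bigr)$, where the individually singular terms $\mathcal{A}(t)e^{-(t-s)\mathcal{A}(t)}u(s)$ and $-e^{-(t-s)\mathcal{A}(t)}\mathcal{A}(s)u(s)$ recombine into the integrable kernel; integrating over $(0,t)$ and using $\phi(t)=u(t)$, $\phi(0)=e^{-tA(t)}u_0$ recovers \eqref{FTLJ}. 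The only delicate point on this second route is justifying the fundamental theorem of calculus for $\phi$ up to the endpoint $s=t$, where $\mathcal{A}(t)e^{-(t-s)\mathcal{A}(t)}$ blows up — which is exactly the difficulty the clean variation-of-constants route sidesteps.

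For part (2), let $u_1,u_2\in MR(V,V')$ both satisfy \eqref{FTLJ} and set $w:=u_1-u_2$. The terms $e^{-tA(t)}u_0$ and $\int_0^t e^{-(t-s)\mathcal{A}(t)}f(s)\,{\rm d}s$ cancel, leaving the homogeneous Volterra equation
$$
w(t)=\int_0^t e^{-(t-s)\mathcal{A}(t)}\bigl(\mathcal{A}(t)-\mathcal{A}(s)\bigr)w(s)\,{\rm d}s,\qquad t\in[0,T].
$$
I estimate the kernel in $V$: by Dini continuity $\|\mathcal{A}(t)-\mathcal{A}(s)\|_{\mathscr{L}(V,V')}\le\omega(|t-s|)$, and by \eqref{1.12} with $\ell=1$ (whose constant is uniform in $t$, depending only on $\delta,M,c_H$) $\|e^{-(t-s)\mathcal{A}(t)}\|_{\mathscr{L}(V',V)}\le c/(t-s)$, whence
$$
\|w(t)\|_V\le\int_0^t k(t-s)\,\|w(s)\|_V\,{\rm d}s,\qquad k(\tau):=\frac{c\,\omega(\tau)}{\tau}.
$$
The Dini hypothesis $\int_0^T\tfrac{1}{\tau}\omega(\tau)\,{\rm d}\tau<\infty$ says precisely that $k\in L^1(0,T)$, which is exactly what tames the $1/(t-s)$ singularity. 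Since $\|w(\cdot)\|_V\in L^2(0,T)$, Young's convolution inequality gives $\|w\|_{L^2(0,\tau_0;V)}\le\|k\|_{L^1(0,\tau_0)}\,\|w\|_{L^2(0,\tau_0;V)}$; choosing $\tau_0$ small enough that $\|k\|_{L^1(0,\tau_0)}<1$ forces $w\equiv0$ on $[0,\tau_0]$, and iterating over successive intervals of length $\tau_0$ yields $w\equiv0$ on $[0,T]$. The main conceptual obstacle in the whole proposition is this last point: one can only control the perturbation $\mathcal{A}(t)-\mathcal{A}(s)$ as a map $V\to V'$, so the natural kernel carries an $L^1$-critical singularity, and the role of Dini continuity is exactly to restore integrability of $\omega(\tau)/\tau$.
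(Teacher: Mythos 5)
Your proposal is correct; the substance matches the paper, with the only genuine divergence in part (1). There the paper differentiates $s\mapsto e^{-(t-s)\mathcal{A}(t)}u(s)$ directly, computes $\dot v(s)=e^{-(t-s)\mathcal{A}(t)}\bigl(\mathcal{A}(t)-\mathcal{A}(s)\bigr)u(s)+e^{-(t-s)\mathcal{A}(t)}f(s)$ and integrates from $0$ to $t$ --- i.e.\ exactly the ``second route'' you sketch. Your primary route instead freezes the coefficient at time $t$ and outsources the work to the autonomous theory: uniqueness of $MR(V,V')$-solutions (the autonomous case of Theorem~\ref{lions}) plus the identification of that solution with the variation-of-constants formula. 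That identification is a standard fact for analytic semigroups on Hilbert spaces (the mild solution lies in $MR(V,V')$ by de Simon's maximal regularity \eqref{maxreg-aut} applied on the Hilbert space $V'$, together with the energy estimate for $s\mapsto e^{-s\mathcal{A}(t)}u_0$ with $u_0\in H$, so Lions' uniqueness applies), but note that its usual proof is precisely the differentiation argument; your route therefore trades the endpoint-justification issue you flag for a citation rather than eliminating it, in exchange for hiding all singular-kernel manipulations inside a black box. For part (2) your argument is essentially the paper's: the same homogeneous Volterra equation, the same kernel bound $c\,\omega(t-s)/(t-s)$ obtained from \eqref{1.12} with $\ell=1$ and Dini continuity, and the same smallness argument via Young's convolution inequality. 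You iterate over intervals of fixed length $\tau_0$, which is legitimate (once $w\equiv 0$ on $[0,j\tau_0]$, the equation only involves the integral over $[j\tau_0,t]$, and the $L^1$-bound on the kernel is translation invariant), whereas the paper runs a supremum-and-contradiction argument --- a purely cosmetic difference. One point to make explicit in your iteration: Young's inequality yields $w=0$ only almost everywhere on each interval, so you should invoke $MR(V,V')\subset\mathscr{C}([0,T];H)$, as the paper does, to conclude $w\equiv 0$ everywhere before (and after) passing to the next interval.
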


\begin{proof}
\begin{enumerate}
\item
This formula already appears in \cite[formula~(1.18), p.\,57]{AT87} for operators 
with different properties; see also \cite[Lemma~2.4]{HO14}).
Let $0<t\le T$. Consider the function 
$v:[0,t]\ni s\mapsto e^{-(t-s)A(t)}u(s)$. Then 
$v\in{\mathscr{C}}([0,t];H)\cap H^1(0,t;V')$ and
\begin{align}
\dot v(s)&=A(t)e^{-(t-s)A(t)}u(s)+e^{-(t-s){\mathcal{A}}(t)}\dot u(s)
\nonumber\\[4pt]
&=e^{-(t-s){\mathcal{A}}(t)}\Bigl({\mathcal{A}}(t)u(s)+\bigl(-{\mathcal{A}}(s)u(s)
+f(s)\bigr)\Bigr)
\nonumber\\[4pt]
&=e^{-(t-s){\mathcal{A}}(t)}\bigl({\mathcal{A}}(t)-{\mathcal{A}}(s)\bigr)u(s)
+e^{-(t-s){\mathcal{A}}(t)}f(s).
\label{v'}
\end{align}
Thus integrating between $0$ and $t$ gives 
$\displaystyle{v(t)=\int_{0}^t\dot v(s)\,{\rm d}s +v(0)}$ which is the claim.
\item
In order to prove uniqueness, assume that there are two functions $u_1$ and $u_2$ 
in the space $MR(V,V')$ satisfying \eqref{FTLJ} and denote by $w$ the difference 
$u_1-u_2$. Then $w\in MR(V,V')$ and satisfies
\begin{equation}
\label{FTLJbis}
w(t)=\int_{0}^t e^{-(t-s){\mathcal{A}}(t)}\bigl({\mathcal{A}}(t)-{\mathcal{A}}(s)
\bigr)w(s)\,{\rm d}s,
\end{equation}
for all $t\in[0,T]$. Let $\tau:=\sup\bigl\{t\in[0,T];w(s)=0\mbox{ on }[0,t]\bigr\}$.
Assume that $\tau<T$. Using \eqref{FTLJbis}, 
the continuity properties of ${\mathcal{A}}(\cdot)$ and the estimate \eqref{1.12} for
$\ell=1$, we obtain for all $\tau\le t\le T$
$$
\|w(t)\|_V\le \int_{\tau}^t\frac{c}{t-s}\,\omega(t-s)\|w(s)\|_V\,{\rm d}s
$$
and therefore by Young's inequality for convolution
$$
\|w\|_{L^2(\tau,t;V)}\le c\,\Bigl(\int_0^{t-\tau}\frac{\omega(s)}{s}\,{\rm d}s\Bigr) 
\|w\|_{L^2(\tau,t;V)}.
$$
Choosing $\varepsilon$ small enough so that $\tau+\varepsilon\le T$ and
$c\,\bigl(\int_0^{\varepsilon}\frac{\omega(s)}{s}\,{\rm d}s\bigr)<1$, we proved that 
$w(t)=0$ almost everywhere on $[\tau,\tau+\varepsilon]$. Since
$MR(V,V')\subset{\mathscr{C}}([0,T];H)$, we have then $w(t)=0$
everywhere on $[\tau,\tau+\varepsilon]$, which contradict the
definition of $\tau$. This proves that $\tau=T$ and ultimately $w(t)=0$ on $[0,T]$.
\end{enumerate}
\end{proof}


\section{Maximal regularity in $H$}
\label{section3}

Let $V$ and $H$ be two separable complex Hilbert spaces such that
$V\underset{d}\hookrightarrow H$. Let $\fra:[0,T]\times V\times V\to{\mathbb{C}}$
be a non-autonomous form satisfying \eqref{bddcoer} and \eqref{measurable}. 
We denote by ${\mathcal{A}}(t)$ the operator on $V'$ associated with 
$\fra(t;.,.)$ and by $A(t)$ its part in $H$. The essential further condition 
concerns continuity in time. We assume that there exist $0\le\gamma<1$ and a 
modulus of continuity $\omega$ such that
\begin{equation}
\label{omega}
|\fra(t;u,v)-\fra(s;u,v)|\le \omega(|t-s|)\|u\|_{V}\|v\|_{V_\gamma}
\end{equation}
for all $t,s\in[0,T]$, $u,v\in V_\gamma$. We suppose that 
$\omega:[0,T]\to[0,+\infty)$
is continuous and satisfies
\begin{align}
&\sup_{t\in[0,T]}\frac{\omega(t)}{t^{\gamma/2}}<\infty
\label{omega-bdd}\\[4pt]
\mbox{and}&\quad\int_0^T\frac{\omega(t)}{t^{1+\gamma/2}}\,{\rm d}t<\infty.
\label{omegaL1}
\end{align}
The main example of such a continuity modulus is the function $\omega(t)=t^\alpha$ 
with $\alpha>\frac{\gamma}{2}$. We remark that conditions \eqref{omega-bdd},
\eqref{omegaL1} imply that
\begin{equation}
\label{omegaL2}
\int_0^T\frac{\omega(t)^2}{t^{1+\gamma}}\,{\rm d}t<\infty.
\end{equation}
Finally, we impose that $\fra(0;.,.)$ has the square root property. By 
Proposition~\ref{sqrt-perturbed} this implies that $\fra(t;.,.)$ has the 
square root property for all $t\in [0,T]$. Under the preceding conditions we 
have the following result on {\em maximal regularity in $H$}.

\begin{theorem}
\label{maxreg-nonaut}
Assume that $\fra(0;.,.)$ has the square root property.
Let $u_0\in V$, $f\in L^2(0,T;H)$. Then there exists a unique 
$u\in H^1(0,T;H)\cap L^2(0,T;V)$ such that $u(t)\in D(A(t))$ $a.e.$ and
\begin{equation}
\label{Cauchy}
\left\{\begin{array}{rcl}
\dot u(t)+A(t)u(t)&=&f(t)\quad a.e.
\\[4pt]
u(0)=u_0.
\end{array}\right.
\end{equation}
\end{theorem}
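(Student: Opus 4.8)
The plan is to take the unique weak solution furnished by Lions' Theorem~\ref{lions} and to bootstrap its regularity from $V'$ to $H$ by means of the representation formula~\eqref{FTLJ}. Since $f\in L^2(0,T;H)\hookrightarrow L^2(0,T;V')$ and $u_0\in V\hookrightarrow H$, Theorem~\ref{lions} produces a unique $u\in MR(V,V')$ solving~\eqref{CP}, and Proposition~\ref{TLJ}(1) shows that this $u$ satisfies~\eqref{FTLJ}. Because $u$ already lies in $H^1(0,T;V')$, the entire problem reduces to proving $A(\cdot)u(\cdot)\in L^2(0,T;H)$: then $\dot u=f-A(\cdot)u(\cdot)\in L^2(0,T;H)$, so $u\in H^1(0,T;H)\cap L^2(0,T;V)$. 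Uniqueness then comes for free, since any solution of~\eqref{Cauchy} of the asserted regularity is in particular a solution of~\eqref{CP} in $MR(V,V')$ (using $H\hookrightarrow V'$ and $A(t)u=\mathcal A(t)u$ on $D(A(t))$), hence coincides with $u$ by Theorem~\ref{lions}. Note also that $\fra(t;\cdot,\cdot)$ has the square root property for every $t$ by Proposition~\ref{sqrt-perturbed}, applied to the pair $\fra(0),\fra(t)$ via~\eqref{omega}.

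Applying $A(t)$ to~\eqref{FTLJ} produces three terms, which I would estimate separately in $L^2(0,T;H)$. The perturbation term $\int_0^t A(t)e^{-(t-s)\mathcal A(t)}(\mathcal A(t)-\mathcal A(s))u(s)\,\mathrm ds$ is the most transparent. By~\eqref{omega}, $(\mathcal A(t)-\mathcal A(s))u(s)\in V_\gamma'$ with norm at most $\omega(|t-s|)\|u(s)\|_V$, while the Cauchy representation~\eqref{semigroup} together with~\eqref{1.3} (for $\ell=\gamma$) gives $\|A(t)e^{-\tau\mathcal A(t)}\|_{\mathscr L(V_\gamma',H)}\le c\,\tau^{-(1+\gamma/2)}$. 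Hence the pointwise $H$-norm is controlled by the convolution of $\|u(\cdot)\|_V\in L^2$ with the kernel $\tau\mapsto\omega(\tau)\tau^{-(1+\gamma/2)}$, which lies in $L^1(0,T)$ exactly by hypothesis~\eqref{omegaL1}; Young's inequality then bounds this term by $c\,\|u\|_{L^2(0,T;V)}$. Crucially this uses only the already available bound $u\in L^2(0,T;V)$, so no fixed point or circular estimate is needed.

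For the remaining two terms the operator is \emph{frozen} at the outer time $t$, so the autonomous results do not apply directly; my plan is to compare the frozen operator to the fixed reference operator $A(0)$. For the initial-value term I write $A(t)e^{-tA(t)}u_0=A(0)e^{-tA(0)}u_0+\bigl[A(t)e^{-tA(t)}-A(0)e^{-tA(0)}\bigr]u_0$. The first summand lies in $L^2(0,T;H)$ because $u_0\in V=D(A(0)^{1/2})$ and $t\mapsto e^{-tA(0)}u_0\in H^1(0,T;H)$ by~\eqref{maxreg-aut-u0}, whose derivative is exactly $-A(0)e^{-tA(0)}u_0$. For the difference I would insert the resolvent identity $(\lambda-\mathcal A(t))^{-1}-(\lambda-\mathcal A(0))^{-1}=(\lambda-\mathcal A(t))^{-1}(\mathcal A(t)-\mathcal A(0))(\lambda-\mathcal A(0))^{-1}$ into the contour integral for $A e^{-tA}$; since $\mathcal A(t)-\mathcal A(0)\in\mathscr L(V,V_\gamma')$ has norm $\le\omega(t)$, estimates~\eqref{1.6} (for $\ell=1$) and~\eqref{1.3} (for $\ell=\gamma$) yield $\|A(t)e^{-tA(t)}-A(0)e^{-tA(0)}\|_{\mathscr L(V,H)}\le c\,\omega(t)\,t^{-\gamma/2}$, which is bounded by~\eqref{omega-bdd}. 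This contribution is therefore bounded in $t$, a fortiori in $L^2(0,T;H)$.

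The inhomogeneity term $A(t)\int_0^t e^{-(t-s)\mathcal A(t)}f(s)\,\mathrm ds$ is the main obstacle. The same comparison splits it into $\int_0^t A(0)e^{-(t-s)A(0)}f(s)\,\mathrm ds$, which belongs to $L^2(0,T;H)$ by De~Simon's maximal regularity~\eqref{maxreg-aut} for the fixed operator $A(0)$ (extending $f$ by zero), plus a remainder $\int_0^t\bigl[A(t)e^{-(t-s)A(t)}-A(0)e^{-(t-s)A(0)}\bigr]f(s)\,\mathrm ds$. Reading the resolvent-difference computation now on $H$ via~\eqref{1.2} (for $\ell=0$) and~\eqref{1.3} (for $\ell=\gamma$) gives $\|A(t)e^{-\tau\mathcal A(t)}-A(0)e^{-\tau\mathcal A(0)}\|_{\mathscr L(H)}\le c\,\omega(t)\,\tau^{-(1+\gamma)/2}$; since $\gamma<1$ the kernel $\tau^{-(1+\gamma)/2}$ is integrable and $\omega$ is bounded, so Young's inequality again places the remainder in $L^2(0,T;H)$. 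I expect the delicate points to be precisely here: (i) the frozen maximal-regularity integral is only convergent in the $L^2$ (non-absolute) sense, so the splitting against $A(0)$ must be performed at the level of De~Simon's operator (e.g.\ by truncating to $[0,t-\varepsilon]$ or regularizing $A(t)$ and passing to the limit) rather than pointwise; and (ii) one must justify that the element of $H$ so constructed really equals $A(t)u(t)$, i.e.\ that $u(t)\in D(A(t))$ for a.e.\ $t$ — this follows from the closedness of $\mathcal A(t)$ and the fact that, after applying $\mathcal A(t)$, all three terms of~\eqref{FTLJ} converge in $H$. Combining the three estimates gives $A(\cdot)u(\cdot)\in L^2(0,T;H)$ and finishes the proof.
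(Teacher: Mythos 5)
Your proposal is correct, and for the first two terms of \eqref{FTLJ} it follows the paper's proof almost verbatim: the initial-value term is compared with $A(0)e^{-tA(0)}u_0$ (square root property plus \eqref{maxreg-aut-u0}, then the resolvent-difference contour integral giving $\|A(t)e^{-tA(t)}u_0-A(0)e^{-tA(0)}u_0\|_H\le c\,\omega(t)t^{-\gamma/2}\|u_0\|_V$, bounded by \eqref{omega-bdd}), and the inhomogeneity term is compared with the De~Simon term for $A(0)$ via \eqref{maxreg-aut}, with the same integrable kernel $\omega(t)(t-s)^{-(1+\gamma)/2}$. Where you genuinely diverge is the perturbation term $u_3$. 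The paper never estimates $A(t)u_3(t)$ against $\|u\|_{L^2(0,T;V)}$; instead it introduces the operator $(Qg)(t)=\int_0^t A(t)e^{-(t-s)A(t)}\bigl({\mathcal{A}}(t)-{\mathcal{A}}(s)\bigr)A(s)^{-1}g(s)\,{\rm d}s$ on $L^2(0,T;H)$, arranges $\|Q\|\le\frac{1}{2}$ by first shifting $A(t)\mapsto A(t)+\mu\,{\rm Id}$ (Proposition~\ref{shift-inv}) so that $\|A(s)^{-1}\|_{{\mathscr{L}}(H,V)}\le\varepsilon$, inverts ${\rm Id}-Q$ by a Neumann series, and identifies the resulting function $w$ with $u$ through the uniqueness statement of Proposition~\ref{TLJ}(2). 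You instead bound $\|A(t)u_3(t)\|_H$ by the convolution of the kernel $\tau\mapsto\omega(\tau)\tau^{-1-\gamma/2}$ (which is $L^1$ precisely by \eqref{omegaL1}, and which is the very same kernel appearing in the paper's estimate of $Q$, obtained from \eqref{anasg} and \eqref{1.14}) with $\|u(\cdot)\|_V$, and close with Young's inequality using that $u\in L^2(0,T;V)$ is already guaranteed by Lions' Theorem~\ref{lions}. This is legitimate — there is no circularity, since the $L^2(0,T;V)$ bound is a priori information, and $u_3(t)\in D(A(t))$ follows from closedness of $A(t)$ because both the integrand and $A(t)$ applied to it are absolutely integrable in $H$ for a.e.\ $t$ — and it is leaner: you need neither the shift/smallness device, nor the invertibility of ${\rm Id}-Q$, nor Proposition~\ref{TLJ}(2) and its Dini-continuity hypothesis. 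What the paper's fixed-point route buys in exchange is that it never uses the $V$-norm of $u$: the Neumann series produces $w$ with an estimate purely in terms of $\|u_0\|_V$ and $\|f\|_{L^2(0,T;H)}$, in the Acquistapace--Terreni tradition; but since both you and the paper invoke Lions' theorem anyway for existence and for the representation formula, your shortcut loses nothing here. Your uniqueness argument (any solution of \eqref{Cauchy} with the stated regularity lies in $MR(V,V')$, solves \eqref{CP}, hence equals the Lions solution) is exactly right and is what the paper leaves implicit, and the two technical caveats you flag — that the frozen maximal-regularity integrals converge only in the $L^2$ sense, so the comparison with $A(0)$ must be carried out on the absolutely convergent difference, and that membership $u(t)\in D(A(t))$ must be deduced a posteriori — are genuine, and are treated no more rigorously in the paper than in your sketch.
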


Thus the solution $u$ is in the space
$$
MR_\fra:=\bigl\{u\in H^1(0,T;H)\cap L^2(0,T;V): u(0)\in V, 
{\mathcal{A}}(\cdot)u(\cdot)\in L^2(0,T;H)\bigr\}.
$$
We will see below that $MR_\fra\subset{\mathscr{C}}([0,T];V)$.

\begin{remark}
\begin{itemize}
\item[(a)]
The space $MR_\fra$ endowed with the norm 
$$
\|u\|_{MR_\fra}=\|\dot u\|_{L^2(0,T;H)}+\|A(\cdot)u(\cdot)\|_{L^2(0,T;H)}+\|u(0)\|_V
$$
is a Banach space.
\item[(b)]
It follows from the Closed Graph Theorem that there exists a constant $c>0$ 
such that 
$$
\|u\|_{H^1(0,T;H)}+\|{\mathcal{A}}(\cdot)u(\cdot)\|_{L^2(0,T;H)}
\le c\,\bigl(\|u_0\|_V+\|f\|_{L^2(0,T;H)}\bigr)
$$
for each $u_0\in V$ and $f\in L^2(0,T;H)$, where $u$ is the solution of 
\eqref{Cauchy}.
\end{itemize}
\end{remark}

\begin{proof}[Proof of Theorem~\ref{maxreg-nonaut}]
By Lions' Theorem~\ref{lions} there exists a unique solution $u\in MR(V,V')$ of 
the problem. We have to show that ${\mathcal{A}}(\cdot)u(\cdot)\in L^2(0,T;H)$. 
For that we use the decomposition \eqref{FTLJ} and show that 
${\mathcal{A}}(\cdot)u_j(\cdot)\in L^2(0,T;H)$ for $j=1,2,3$
where
\begin{align*}
&u_1(t)=e^{-tA(t)}u_0,\quad u_2(t)=\int_0^te^{-(t-s)A(t)}f(s)\,{\rm d}s,
\\[4pt]
&\mbox{and}
\quad u_3(t)=\int_0^te^{-(t-s){\mathcal{A}}(t)}\bigl({\mathcal{A}}(t)-{\mathcal{A}}(s)\bigr)u(s)\,{\rm d}s.
\end{align*}
Remark that \eqref{omega} implies for all $t,s\in[0,T]$
\begin{equation}
\label{A(t)-A(s)}
{\mathcal{A}}(t)-{\mathcal{A}}(s)\in{\mathscr{L}}(V,V_\gamma')\quad\mbox{and}
\quad \|{\mathcal{A}}(t)-{\mathcal{A}}(s)\|_{{\mathscr{L}}(V,V_\gamma')}
\le c\,\omega(|t-s|).
\end{equation}
We divide the proof into three steps.

\noindent
{\tt Step 1:}
We adapt the proof of \cite[Lemma~2.7]{HO14} to our situation.
Since ${\mathfrak{a}}(0;.,.)$ has the square root property, thanks to 
\eqref{maxreg-aut-u0},
$t\mapsto A(0)e^{-tA(0)}u_0\in L^2(0,T;H)$. Thus it suffices to show that
$$
\phi:t\mapsto A(t)e^{-tA(t)}u_0-A(0)e^{-tA(0)}u_0\in L^2(0,T;H).
$$
Using
\begin{equation}
\label{3.6}
(\lambda{\rm Id}-A(t))^{-1}-(\lambda{\rm Id}-A(0))^{-1}
=(\lambda{\rm Id}-{\mathcal{A}}(t))^{-1}({\mathcal{A}}(t)-{\mathcal{A}}(0))
(\lambda{\rm Id}-A(0))^{-1}
\end{equation}
we see that
$$
\phi(t)=\frac{1}{2\pi i}\int_\Gamma \lambda e^{-\lambda t}
(\lambda{\rm Id}-A(t))^{-1}({\mathcal{A}}(t)-{\mathcal{A}}(0))
(\lambda{\rm Id}-A(0))^{-1}u_0\,{\rm d}\lambda.
$$
Using \eqref{1.5} for $\ell=\gamma$ and \eqref{1.6} for $\ell=1$ we estimate
\begin{align*}
\|\phi(t)\|_H&\le
c\,\int_\Gamma|\lambda|e^{-t\Re e\,\lambda}\,
\frac{1}{|\lambda|^{1-\gamma/2}}\,\omega(t)
\|(\lambda{\rm Id}-A(0))^{-1}u_0\|_{V}\,{\rm d}|\lambda|
\\[4pt]
&\le c\,\omega(t)\int_\Gamma|\lambda|e^{-t\Re e\,\lambda}\,
\frac{1}{|\lambda|^{1-\gamma/2}}\,
\frac{1}{|\lambda|}\|u_0\|_V\,{\rm d}|\lambda|
\\[4pt]
&\le c\,\omega(t)\|u_0\|_V\int_0^\infty e^{-tr\cos\vartheta}\,
\frac{1}{r^{1-\gamma/2}}\,{\rm d}r
\\[4pt]
&\le c\,\omega(t)\|u_0\|_V\int_0^\infty e^{-\rho\cos\vartheta}\,
\frac{t^{1-\gamma/2}}{\rho^{1-\gamma/2}}\,\frac{1}{t}\,{\rm d}\rho
\ \le\ c\,\frac{\omega(t)}{t^{\gamma/2}}\,\|u_0\|_V.
\end{align*}
It follows from \eqref{omega-bdd} that $\phi\in L^2(0,T;H)$.

\medskip

\noindent
{\tt Step 2:}
We show that $t\mapsto\int_0^t A(t)e^{-(t-s)A(t)}f(s)\,ds\in L^2(0,T;H)$.
The proof here is much more elementary than the one of 
\cite[Lemma~2.5]{HO14} thanks to our stronger condition \eqref{omega} on 
the form ${\mathfrak{a}}$.
By \eqref{maxreg-aut} $A(0)$ satisfies maximal regularity and
$$
t\mapsto \int_0^t A(0)e^{-(t-s)A(0)}f(s)\,{\rm d}s \in L^2(0,T;H).
$$
Thus it suffices to show that
$$
\phi:t\mapsto \int_0^t A(t)e^{-(t-s)A(t)}f(s)\,{\rm d}s
-\int_0^t A(0)e^{-(t-s)A(0)}f(s)\,{\rm d}s \in L^2(0,T;H).
$$
As before we have
$$
\phi(t)=\int_0^t\frac{1}{2\pi i}\int_\Gamma \lambda e^{-(t-s)\lambda}
(\lambda{\rm Id}-{\mathcal{A}}(t))^{-1}({\mathcal{A}}(t)-{\mathcal{A}}(0))
(\lambda{\rm Id}-A(0))^{-1}f(s)\,{\rm d}\lambda\,{\rm d}s.
$$
Using \eqref{1.3} for $\ell=\gamma$ and \eqref{1.5} for $\ell=1$ we obtain
\begin{align*}
\|\phi(t)\|_H&\le c\int_0^t\int_0^\infty re^{-(t-s)r\cos\vartheta}\,
\frac{1}{r^{1-\gamma/2}}\,\omega(t)\,
\frac{1}{r^{1/2}}\,\|f(s)\|_H\,{\rm d}r\,{\rm d}s
\\[4pt]
&= c\,\omega(t)\int_0^t\Bigl(\int_0^\infty e^{-\rho\cos\vartheta}\,
\rho^{\frac{\gamma-1}{2}}\,
\,{\rm d}\rho\Bigr)(t-s)^{-\frac{1+\gamma}{2}}\|f(s)\|_H\,{\rm d}s
\\[4pt]
&= c\,\omega(t)\bigl(h*\|f(\cdot)\|_H\bigr)(t)\quad \mbox{for all }0\le t\le T
\end{align*}
where $h$ is defined by
$h(t)=\left\{\begin{array}{ll}
0&\mbox{for }t\le0
\\[4pt]
t^{-\frac{1+\gamma}{2}}&\mbox{for }0<t\le T
\\[4pt]
0&\mbox{for }t\ge T
\end{array}\right.$
is in $L^1({\mathbb{R}})$ since $\frac{1+\gamma}{2}<1$. It follows that 
$$
\int_0^T\|\phi(t)\|^2_H\,{\rm d}t<\infty.
$$
{\tt Step 3:}
In order to show that $A(\cdot)u_3(\cdot)\in L^2(0,T;H)$, we define for 
$g\in L^2(0,T;H)$,
\begin{align*}
(Qg)(t)&:=\int_0^t A(t)e^{-(t-s)A(t)}\bigl({\mathcal{A}}(t)-{\mathcal{A}}(s)\bigr)
A(s)^{-1}g(s)\,{\rm d}s
\\[4pt]
&=\int_0^t A(t)e^{-\frac{t-s}{2}A(t)}e^{-\frac{t-s}{2}{\mathcal{A}}(t)}
\bigl({\mathcal{A}}(t)-{\mathcal{A}}(s)\bigr)A(s)^{-1}g(s)\,{\rm d}s.
\end{align*}
The proof here is very much inspired by \cite[end of \S1]{AT87} (see
also \cite[beginning of \S3]{HM00}, \cite[Proof of Theorem~1.2]{HO14}).
Let $\varepsilon>0$. Replacing $A(s)$ by $A(s)+\mu{\rm Id}$ 
(see Proposition~\ref{shift-inv}) we may assume that 
$\|A(s)^{-1}\|_{{\mathscr{L}}(H,V)}\le \varepsilon$ (see \eqref{1.5} for $\ell=1$)
for all $s\ge0$. Thus by \eqref{1.14} for $\ell=\gamma$ and \eqref{anasg} we 
have the following estimates
\begin{align*}
\|Qg(t)\|_H&\le
\int_0^t \|A(t)e^{-\frac{t-s}{2}A(t)}\|_{{\mathscr{L}}(H)}
\|e^{-\frac{t-s}{2}{\mathcal{A}}(t)}\|_{{\mathscr{L}}(V_\gamma',H)}
\,\omega(t-s)\, \varepsilon\,\|g(s)\|_H\,{\rm d}s
\\[4pt]
&\le c\,\varepsilon \int_0^t\frac{1}{t-s}\,\frac{1}{(t-s)^{\gamma/2}}\,
\omega(t-s)\|g(s)\|_H\,{\rm d}s.
\end{align*}
Since $k(t)=\frac{1}{t^{1+\gamma/2}}\,\omega(t)$ defines a function 
$k\in L^1(0,T)$ thanks to \eqref{omegaL1}, it follows that $Qg\in L^2(0,T;H)$ and
$$
\|Qg\|_{L^2(0,T;H)}\le c\,\varepsilon\,\|g\|_{L^2(0,T;H)}.
$$
Choosing $\varepsilon>0$ small enough we can arrange that 
$\|Q\|_{{\mathscr{L}}(L^2(0,T;H))}\le \frac{1}{2}$.
Thus ${\rm Id}-Q$ is invertible. By Step~1 and Step~2 we know that 
$$
h:=A(\cdot)\bigl(u_1(\cdot)+u_2(\cdot)\bigr)\in L^2(0,T;H).
$$
Let $w=A(\cdot)^{-1}({\rm Id}-Q)^{-1}h$. Then $A(\cdot)w(\cdot)\in L^2(0,T;H)$. 
Since $h=A(\cdot)w(\cdot)-Q\bigl(A(\cdot)w(\cdot)\bigr)$ one has
\begin{align*}
&A(t)e^{-tA(t)}u_0+A(t)\int_0^t e^{-(t-s)A(t)}f(s)\,{\rm d}s
+A(t)\int_0^t e^{-(t-s)A(t)}\bigl({\mathcal{A}}(t)-{\mathcal{A}}(s)\bigr)w(s)\,{\rm d}s 
\\[4pt]
&=\ A(t)w(t).
\end{align*}
Applying $A(t)^{-1}$ on both sides we see from Proposition~\ref{TLJ} that $w=u$. 
Hence $A(\cdot)u(\cdot)=A(\cdot)w(\cdot)\in L^2(0,T;H)$.
\end{proof}

\begin{remark}
If we do not suppose the square root property, then the proof of
Theorem~\ref{maxreg-nonaut} shows that for $u_0\in D(A(0)^{\frac{1}{2}})$,
$f\in L^2(0,T;H)$, the solution $u$ given by Lions' Theorem is in
$H^1(0,T;H)$, i.e., we have the same conclusion as in 
Theorem~\ref{maxreg-nonaut} if we choose the right trace space.
\end{remark}

As announced above, we now show that

\begin{theorem}
\label{MRcont}
The space $MR_{\fra}$ is continuously embedded into ${\mathscr{C}}([0,T],V)$.
\end{theorem}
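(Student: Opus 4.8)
The plan is to feed the regularity already gained in Theorem~\ref{maxreg-nonaut} into the representation formula \eqref{FTLJ} and to compare every term with the autonomous semigroup generated by the single operator $A(0)$. Given $u\in MR_{\fra}$, set $f:=\dot u+A(\cdot)u(\cdot)\in L^2(0,T;H)$ and $u_0:=u(0)\in V$; then $u$ is the solution of \eqref{CP} for these data, so \eqref{FTLJ} yields $u=u_1+u_2+u_3$ with $u_1(t)=e^{-tA(t)}u_0$, $u_2(t)=\int_0^t e^{-(t-s)\mathcal{A}(t)}f(s)\,{\rm d}s$ and $u_3(t)=\int_0^t e^{-(t-s)\mathcal{A}(t)}(\mathcal{A}(t)-\mathcal{A}(s))u(s)\,{\rm d}s$. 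Since I will show each $u_j(t)$ lies in $V$ for \emph{every} $t$, the representation simultaneously upgrades the a.e.\ statement $u(t)\in V$ to all $t\in[0,T]$ and produces the uniform bound giving the continuous embedding.

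The autonomous input I would record first. Because $\fra(0;\cdot,\cdot)$ has the square root property, $D(A(0)^{\frac12})=V$, and the $L^2$-maximal regularity trace theorem (De~Simon together with the identification of the trace space $(H,D(A(0)))_{\frac12,2}=D(A(0)^{\frac12})=V$, which holds since $A(0)$ has bounded imaginary powers on the Hilbert space $H$; compare the endpoint statement \eqref{maxreg-aut-u0}) shows that both $t\mapsto e^{-tA(0)}u_0$ (as $u_0\in V$) and $t\mapsto\int_0^t e^{-(t-s)A(0)}f(s)\,{\rm d}s$ belong to $\mathscr{C}([0,T];V)$.

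Next I would treat $u_1$ and $u_2$ as perturbations of these autonomous expressions. For $u_1$, I write $e^{-tA(t)}u_0-e^{-tA(0)}u_0$ by the resolvent identity \eqref{3.6} and estimate in $V$ using \eqref{1.2} for $\ell=\gamma$, \eqref{A(t)-A(s)}, and \eqref{1.6} for $\ell=1$; since $\tfrac{1-\gamma}{2}+1>1$ the $\lambda$-integral stays bounded as $t\to0$ and the remainder is $\le c\,\omega(t)\|u_0\|_V$. For $u_2$, subtracting $\int_0^t e^{-(t-s)A(0)}f(s)\,{\rm d}s$ and using the same identity gives the kernel bound $\|e^{-(t-s)\mathcal{A}(t)}-e^{-(t-s)A(0)}\|_{\mathscr{L}(H,V)}\le c\,\omega(t)(t-s)^{-\gamma/2}$ (from \eqref{1.2} for $\ell=\gamma$ and \eqref{1.5} for $\ell=1$); as $\gamma<1$ the kernel is square integrable, so Cauchy--Schwarz yields a remainder $\le c\,\omega(t)\,t^{(1-\gamma)/2}\|f\|_{L^2(0,T;H)}$. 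Both remainders are $V$-valued, bounded, and tend to $0$ as $t\to0$; their continuity in $t$ follows by dominated convergence from the integrability of the kernels and the continuity of the resolvents (with modulus $\omega$). Hence $u_1,u_2\in\mathscr{C}([0,T];V)$.

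The term $u_3$ I would handle directly, with no freezing, since it already carries the increment $\mathcal{A}(t)-\mathcal{A}(s)$. By \eqref{1.12} for $\ell=1$ and \eqref{A(t)-A(s)},
\[
\|u_3(t)\|_V\le c\int_0^t (t-s)^{-\frac{1+\gamma}{2}}\,\omega(t-s)\,\|u(s)\|_V\,{\rm d}s ,
\]
and Cauchy--Schwarz combined with \eqref{omegaL2}, which guarantees $\sigma\mapsto\sigma^{-(1+\gamma)}\omega(\sigma)^2\in L^1(0,T)$, gives $\|u_3(t)\|_V\le c\,\|u\|_{L^2(0,T;V)}$ uniformly in $t$; the same $L^2$-in-$s$ control of the kernel, together with the continuity of $\mathcal{A}(\cdot)$ and of the semigroups and an approximation of $u$ by elements of $\mathscr{C}([0,T];V)$, gives continuity of $u_3$ into $V$. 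Summing the three pieces yields $u\in\mathscr{C}([0,T];V)$ with $\sup_t\|u(t)\|_V\le c(\|u_0\|_V+\|f\|_{L^2(0,T;H)}+\|u\|_{L^2(0,T;V)})$, which is the asserted continuous embedding. The main obstacle is not the boundedness but the continuity of the three pieces: the operator $\mathcal{A}(t)$ enters both the semigroup $e^{-(t-s)\mathcal{A}(t)}$ and the increment $\mathcal{A}(t)-\mathcal{A}(s)$, so the $t$-dependence is twofold and the Bochner integrals must be shown continuous by dominated convergence; the one genuinely non-elementary ingredient is the identification of the autonomous trace space $(H,D(A(0)))_{\frac12,2}$ with $V=D(A(0)^{\frac12})$, which is exactly where the square root property is decisive.
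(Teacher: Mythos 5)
Your proposal is correct, and for the first two terms of the decomposition it follows the paper's own route: both you and the paper plug $f:=\dot u+A(\cdot)u(\cdot)$, $u_0:=u(0)$ into \eqref{FTLJ}, handle the frozen parts $e^{-tA(0)}u_0$ and $\int_0^t e^{-(t-s)A(0)}f(s)\,{\rm d}s$ via the square root property together with the autonomous results \eqref{maxreg-aut}, \eqref{maxreg-aut-u0} (the paper simply cites Lions--Magenes and the embedding $H^1(0,T;H)\cap L^2(0,T;D(A(0)))\hookrightarrow{\mathscr C}([0,T];V)$, where you invoke BIP and interpolation to the same effect), and estimate the differences by the resolvent identity \eqref{3.6} on the contour $\Gamma$. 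In Step~2 the paper applies Fubini so as to apply dominated convergence against the clean bound $\bigl\|\int_0^t e^{-(t-s)\lambda}f(s)\,{\rm d}s\bigr\|_H\le c\,|\lambda|^{-1/2}\|f\|_{L^2(0,T;H)}$, while you use the kernel bound $c\,\omega(t)(t-s)^{-\gamma/2}$ plus Cauchy--Schwarz; both work. The genuine divergence is the third term. The paper never estimates $u_3$ directly: it defines $(Ph)(t)=\int_0^t e^{-(t-s)A(t)}({\mathcal A}(t)-{\mathcal A}(s))h(s)\,{\rm d}s$ on ${\mathscr C}([0,T];V)$, shifts by $\mu$ so that $\|P\|_{{\mathscr L}({\mathscr C}([0,T];V))}\le\frac12$, and identifies $u=({\rm Id}-P)^{-1}(u_1+u_2)$ by a Neumann-series argument parallel to Step~3 of Theorem~\ref{maxreg-nonaut}. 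You instead exploit that $u\in L^2(0,T;V)$ is already given, since $u\in MR_\fra$, and bound $u_3=Pu$ directly by Cauchy--Schwarz using \eqref{omegaL2}; this is more elementary (no smallness, no shift, no inversion of ${\rm Id}-P$, and no implicit uniqueness step needed to identify $u$ with $({\rm Id}-P)^{-1}(u_1+u_2)$), which is a real simplification permitted by the hypotheses of this particular theorem. Two minor repairs: the estimate for $u_3$ uses \eqref{1.12} with $\ell=\gamma$, not $\ell=1$ as you wrote (indeed \eqref{A(t)-A(s)} only places the increment in ${\mathscr L}(V,V_\gamma')$, which is what your exponent $(t-s)^{-\frac{1+\gamma}{2}}$ reflects); and the continuity of $u_3$, which you sketch via density of ${\mathscr C}([0,T];V)$ in $L^2(0,T;V)$ and the uniform-in-$t$ square-integrability of the kernel, ultimately reduces to the continuity of $t\mapsto\int_0^t e^{-(t-s)A(t)}({\mathcal A}(t)-{\mathcal A}(s))h(s)\,{\rm d}s$ for continuous $h$ --- precisely the paper's claim about $Ph$, which requires \eqref{cont2}, \eqref{cont5} and a splitting of the integral near $s=t$ before dominated convergence applies; so you need the same continuity lemma as the paper, merely packaged differently.
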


To prove this theorem, we need the following lemma.

\begin{lemma}
\label{contprop}
Let $\fra$ be a non-autonomous form satisfying \eqref{bddcoer}, \eqref{measurable} 
and \eqref{omega}. Denote by ${\mathcal{A}}(t):V\to V'$ the operator associated 
with $\fra(t;.,.)$ and by $A(t)$ its part in $H$. Then for all 
$\lambda\notin \Sigma_\theta$, $s\in(0,T]$ and $\sigma>0$
the following mappings
\begin{align}
t&\mapsto (\lambda{\rm Id}-tA(0))^{-1}\in{\mathscr{L}}(V),
\label{cont1}\\[4pt]
t&\mapsto {\mathcal{A}}(t)-{\mathcal{A}}(s)\in{\mathscr{L}}(V_\gamma,V_\gamma'),
\label{cont2}\\[4pt]
t&\mapsto (\lambda{\rm Id}-{\mathcal{A}}(t))^{-1}\in{\mathscr{L}}(V_\gamma',V),
\label{cont3}\\[4pt]
t&\mapsto (\lambda{\rm Id}-t{\mathcal{A}}(t))^{-1}\in{\mathscr{L}}(V_\gamma',V),
\label{cont4}\\[4pt]
t&\mapsto e^{-\sigma {\mathcal{A}}(t)}\in{\mathscr{L}}(V_\gamma',V)
\label{cont5}
\end{align}
are continuous on $(0,T]$. 
\end{lemma}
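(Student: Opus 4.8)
The unifying idea is that each of the five maps is a rational expression in the resolvents of $\{{\mathcal{A}}(t)\}$ and of the fixed operator $A(0)$, so I would estimate every difference through a resolvent identity combined with the uniform (rescaled) bounds \eqref{1.1}--\eqref{1.11} and the single quantitative input coming from the time regularity, namely \eqref{A(t)-A(s)}, which gives $\|{\mathcal{A}}(t)-{\mathcal{A}}(t')\|_{{\mathscr{L}}(V,V_\gamma')}\le c\,\omega(|t-t'|)$. The first thing I would record is that $\omega$ is continuous with $\omega(0)=0$: by \eqref{omega-bdd} one has $\omega(\tau)\le C\tau^{\gamma/2}\to0$ as $\tau\to0^{+}$. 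Hence it suffices, at each fixed $t_0\in(0,T]$, to bound the relevant operator-norm difference by a constant (depending on $t_0,\lambda,\sigma$, and allowed to blow up as $t_0\to0$, which is exactly why the interval is $(0,T]$) times $\omega(|t-t_0|)$ or times $|t-t_0|$.

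Statement \eqref{cont2} is then immediate: fixing $s$ turns $-{\mathcal{A}}(s)$ into a constant, and $t\mapsto{\mathcal{A}}(t)$ is $\omega$-continuous by \eqref{omega}, hence continuous. For \eqref{cont1}, where only the fixed operator $A(0)$ occurs, I would use the plain resolvent identity
\[
(\lambda{\rm Id}-tA(0))^{-1}-(\lambda{\rm Id}-t'A(0))^{-1}=(t-t')(\lambda{\rm Id}-tA(0))^{-1}A(0)(\lambda{\rm Id}-t'A(0))^{-1},
\]
and estimate the right-hand side in ${\mathscr{L}}(V)$ after the rescaling $(\lambda{\rm Id}-tA(0))^{-1}=\tfrac1t(\tfrac\lambda t{\rm Id}-A(0))^{-1}$, invoking \eqref{1.6} with $\ell=1$; since $t,t'$ stay near $t_0>0$ the factor $A(0)(\lambda{\rm Id}-t'A(0))^{-1}$ is uniformly bounded, so the difference is $O(|t-t'|)$.

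For \eqref{cont3} I would apply the second resolvent identity $(\lambda{\rm Id}-{\mathcal{A}}(t))^{-1}-(\lambda{\rm Id}-{\mathcal{A}}(t'))^{-1}=(\lambda{\rm Id}-{\mathcal{A}}(t))^{-1}({\mathcal{A}}(t)-{\mathcal{A}}(t'))(\lambda{\rm Id}-{\mathcal{A}}(t'))^{-1}$ and read it off along the chain $V_\gamma'\to V\to V_\gamma'\to V$: the two outer resolvents map $V_\gamma'\to V$ by \eqref{1.2} with $\ell=\gamma$, while the middle factor maps $V\to V_\gamma'$ with norm $\le c\,\omega(|t-t'|)$ by \eqref{A(t)-A(s)}, giving a bound $c_\lambda\,\omega(|t-t'|)$. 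Map \eqref{cont4} is the same identity after the rescaling $(\lambda{\rm Id}-t{\mathcal{A}}(t))^{-1}=\tfrac1t(\tfrac\lambda t{\rm Id}-{\mathcal{A}}(t))^{-1}$; the operator difference $t{\mathcal{A}}(t)-t'{\mathcal{A}}(t')$ splits as $t({\mathcal{A}}(t)-{\mathcal{A}}(t'))+(t-t'){\mathcal{A}}(t')$, the first piece controlled in ${\mathscr{L}}(V,V_\gamma')$ by \eqref{A(t)-A(s)} and the second in ${\mathscr{L}}(V,V')$ by \eqref{bdd}, the outer rescaled resolvents being bounded $V_\gamma'\to V$ (resp. $V'\to V$) by \eqref{1.2} with $\ell=\gamma$ (resp. $\ell=1$), and the rescaling factors staying bounded because $t,t'$ are near $t_0>0$. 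Finally, for \eqref{cont5} I would not differentiate but integrate: from the contour representation \eqref{semigroup},
\[
e^{-\sigma {\mathcal{A}}(t)}-e^{-\sigma {\mathcal{A}}(t')}=\frac{1}{2\pi i}\int_\Gamma e^{-\sigma\lambda}\bigl[(\lambda{\rm Id}-{\mathcal{A}}(t))^{-1}-(\lambda{\rm Id}-{\mathcal{A}}(t'))^{-1}\bigr]\,{\rm d}\lambda,
\]
and inserting the ${\mathscr{L}}(V_\gamma',V)$ estimate of \eqref{cont3} together with \eqref{1.2} ($\ell=\gamma$), the integrand is dominated by $c\,\omega(|t-t'|)\,e^{-\sigma r\cos\vartheta}(1+r)^{-(1-\gamma)}$, which is integrable along $\Gamma$ because $\sigma>0$ and $\cos\vartheta>0$; dominated convergence yields $\|e^{-\sigma {\mathcal{A}}(t)}-e^{-\sigma {\mathcal{A}}(t')}\|_{{\mathscr{L}}(V_\gamma',V)}\le C(\sigma)\,\omega(|t-t'|)\to0$.

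The step I expect to carry the real work is \eqref{cont4}, as it is the only place where both sources of $t$-dependence act at once — the explicit scalar factor $t$ inside the resolvent and the implicit dependence through ${\mathcal{A}}(t)$ — forcing the splitting of $t{\mathcal{A}}(t)-t'{\mathcal{A}}(t')$ and careful bookkeeping of which rescaled resolvent estimate ($\ell=\gamma$ versus $\ell=1$) governs each piece; throughout, the restriction to $(0,T]$ is essential, since the rescaling constants $1/t$ degenerate as $t\to0$.
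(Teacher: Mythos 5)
Your proposal is correct and follows essentially the same route as the paper's proof: the same resolvent identities (your identity for \eqref{cont1} is the paper's formula rewritten before rescaling), the same chains of estimates using \eqref{1.2} and \eqref{1.6} with $\ell=\gamma$ and $\ell=1$ together with \eqref{A(t)-A(s)}, and the same contour-integral representation of the semigroup for \eqref{cont5}. The only cosmetic difference is in \eqref{cont4}, where you split $t{\mathcal{A}}(t)-t'{\mathcal{A}}(t')$ as $t({\mathcal{A}}(t)-{\mathcal{A}}(t'))+(t-t'){\mathcal{A}}(t')$ and control the scalar piece via \eqref{bdd} and \eqref{1.2} with $\ell=1$, whereas the paper uses the symmetric splitting and bounds $A(t)\bigl(\tfrac{\lambda}{t}{\rm Id}-A(t)\bigr)^{-1}$ via \eqref{1.6}; the two are equivalent.
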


\begin{proof}
To prove \eqref{cont1}, we write for $t,s\in(0,T]$ and $\lambda\notin\Sigma_\theta$
$$
(\lambda{\rm Id}-tA(0))^{-1}-(\lambda{\rm Id}-sA(0))^{-1}
= \Bigl(\frac{1}{s}-\frac{1}{t}\Bigr)\,tA(0)(\lambda{\rm Id}-tA(0))^{-1}
\Bigl(\frac{\lambda}{s}{\rm Id}-A(0)\Bigr)^{-1}.
$$
Thanks to \eqref{1.6} for $\ell=1$ the operators $tA(0)(\lambda{\rm Id}-tA(0))^{-1}$ and
$\Bigl(\frac{\lambda}{s}{\rm Id}-A(0)\Bigr)^{-1}$ are uniformly (w.r.t. $t$, $s$ and $\lambda$) 
bounded in $V$. Therefore
$$
(\lambda{\rm Id}-tA(0))^{-1}-(\lambda{\rm Id}-sA(0))^{-1}\longrightarrow 0 
\quad {\mbox{in }{\mathscr{L}}(V)}\mbox{ as } s\to t.
$$
The claim \eqref{cont2} follows immediately from \eqref{omega} since the 
latter implies
\begin{equation}
\label{cont2.1}
\|{\mathcal{A}}(t)-{\mathcal{A}}(s)\|_{{\mathscr{L}}(V,V_\gamma')}\le 
\omega(|t-s|)\longrightarrow 0 \quad \mbox{as } s\to t.
\end{equation}
We now prove \eqref{cont3} as follows. Let $t,s\in (0,T]$ and 
$\lambda\notin\Sigma_\theta$. We have
\begin{equation}
\label{FPS}
(\lambda{\rm Id}-{\mathcal{A}}(t))^{-1}-(\lambda{\rm Id}-{\mathcal{A}}(s))^{-1}
= (\lambda{\rm Id}-{\mathcal{A}}(t))^{-1}\bigl({\mathcal{A}}(t)-{\mathcal{A}}(s)\bigr)
(\lambda{\rm Id}-{\mathcal{A}}(s))^{-1}.
\end{equation}
Therefore, by \eqref{1.2} for $\ell=\gamma$, using \eqref{cont2.1} we have
$$
\bigl\|(\lambda{\rm Id}-{\mathcal{A}}(t))^{-1}-
(\lambda{\rm Id}-{\mathcal{A}}(s))^{-1}\bigr\|_{{\mathscr{L}}(V_\gamma',V)}
\le \frac{c\,\omega(|t-s|)}{(1+|\lambda|)^{1-\gamma}}\longrightarrow 0
\quad\mbox{as }s\to t,
$$
which proves \eqref{cont3}. The proof of \eqref{cont4} combines the ideas of the 
proofs of \eqref{cont1} and \eqref{cont3}. We write
\begin{align*}
(\lambda{\rm Id}-t{\mathcal{A}}(t))^{-1}-(\lambda{\rm Id}-s{\mathcal{A}}(s))^{-1}
=&\Bigl(\frac{1}{s}-\frac{1}{t}\Bigr)\,
A(t)\Bigl(\frac{\lambda}{t}\,{\rm Id}-A(t)\Bigr)^{-1}\Bigl(\frac{\lambda}{s}\,
{\rm Id}-{\mathcal{A}}(s)\Bigr)^{-1}
\\[4pt]
&+\frac{1}{t}\,\Bigl(\frac{\lambda}{t}\,{\rm Id}-{\mathcal{A}}(t)\Bigr)^{-1}
({\mathcal{A}}(t)-{\mathcal{A}}(s))\Bigl(\frac{\lambda}{s}\,
{\rm Id}-{\mathcal{A}}(s)\Bigr)^{-1}
\end{align*}
which implies the following estimate thanks to \eqref{1.6} for $\ell=1$, 
\eqref{1.2} for $\ell=\gamma$ and \eqref{cont2.1}
\begin{align*}
&\bigl\|(\lambda{\rm Id}-t{\mathcal{A}}(t))^{-1}-
(\lambda{\rm Id}-s{\mathcal{A}}(s))^{-1}\bigr\|_{{\mathscr{L}}(V_\gamma',V)}
\\[4pt]
\le& \frac{c}{(1+\frac{|\lambda|}{s})^{\frac{1-\gamma}{2}}}\,
\left((c+1)\Bigl|\frac{1}{s}-\frac{1}{t}\Bigr|
+\frac{1}{t}\,\frac{c\,\omega(|t-s|)}{(1+\frac{|\lambda|}{t})^{\frac{1-\gamma}{2}}}
\right)
\longrightarrow 0 \quad \mbox{as } s\to t.
\end{align*}
Finally, we show \eqref{cont5} using the representation \eqref{semigroup} for the 
semigroup and \eqref{FPS}:
$$
e^{-\sigma{\mathcal{A}}(t)}-e^{-\sigma{\mathcal{A}}(s)}
=\frac{1}{2\pi i}\int_\Gamma e^{-\lambda \sigma}
(\lambda{\rm Id}-{\mathcal{A}}(t))^{-1}
({\mathcal{A}}(t)-{\mathcal{A}}(s))(\lambda {\rm Id}-{\mathcal{A}}(s))^{-1} \, 
{\rm d}\lambda
$$
we obtain, using \eqref{1.2} for $\ell=\gamma$ and \eqref{cont2.1}
\begin{align*}
\bigl\|e^{-\sigma{\mathcal{A}}(t)}-e^{-\sigma{\mathcal{A}}(s)}
\bigr\|_{{\mathscr{L}}(V_\gamma',V)}
&\le c\,\omega(|t-s|)
\int_0^\infty e^{-\sigma r\cos\vartheta}\frac{1}{(1+r)^{1-\gamma}}\,{\rm d}r
\\[4pt]
&\longrightarrow 0\quad \mbox{as } s\to t \ \mbox{ for all }\sigma>0,
\end{align*}
which proves the claim.
\end{proof}

\begin{proof}[Proof of Theorem~\ref{MRcont}]
Assume that $u\in MR_a\subset MR(V,V')\subset{\mathscr{C}}([0,T];H)$. 
Let $f=\dot u(\cdot) +A(\cdot)u(\cdot)$
and $u_0=u(0)$: $f\in L^2(0,T;H)$, $u_0\in V$ and $u$ satisfies \eqref{CP}. By 
Proposition~\ref{TLJ}, we have $u=u_1+u_2+u_3$ where
\begin{align*}
&u_1(t)=e^{-tA(t)}u_0,\quad u_2(t)=\int_0^te^{-(t-s)A(t)}f(s)\,{\rm d}s,
\\[4pt]
&\mbox{and}
\quad u_3(t)=\int_0^te^{-(t-s){\mathcal{A}}(t)}\bigl({\mathcal{A}}(t)-
{\mathcal{A}}(s)\bigr)u(s)\,{\rm d}s.
\end{align*}
We will show that each term $u_j$, $j=1,2,3$, belongs to ${\mathscr{C}}([0,T];V)$.

\medskip

\noindent
{\tt Step 1:} We claim that $u_1\in{\mathscr{C}}([0,T];V)$. Indeed, 
$u_0\in V$ and since $({e^{-tA(0)}}_{|_V})_{t\ge 0}$ defines a $C_0$-semigroup 
one has $t\mapsto e^{-tA(0)}u_0\in{\mathscr{C}}([0,T];V)$.
Let us first consider the case where $t>0$. We have
\begin{align}
e^{-tA(t)}u_0-e^{-tA(0)}u_0
&=\frac{1}{2\pi i}\int_\Gamma e^{-t\lambda}
(\lambda{\rm Id}-{\mathcal{A}}(t))^{-1}({\mathcal{A}}(t)-{\mathcal{A}}(0))
(\lambda{\rm Id}-A(0))^{-1}u_0\,{\rm d}\lambda
\label{PMG}\\[4pt]
&=\frac{1}{2\pi i}\int_\Gamma e^{-\eta}\Bigl(\frac{\eta}{t}\,{\rm Id}-
{\mathcal{A}}(t)\Bigr)^{-1}
({\mathcal{A}}(t)-{\mathcal{A}}(0))\Bigl(\frac{\eta}{t}\,{\rm Id}-A(0)\Bigr)^{-1}
u_0\,\frac{{\rm d}\eta}{t}.
\nonumber
\end{align}
Estimates \eqref{1.6} for $\ell=1$ and \eqref{1.2} for $\ell=\gamma$ imply
\begin{align*}
&\Bigl\|\frac{1}{t}\,e^{-\eta}\Bigl(\frac{\eta}{t}\,{\rm Id}-{\mathcal{A}}(t)\Bigr)^{-1}
({\mathcal{A}}(t)-{\mathcal{A}}(0))
\Bigl(\frac{\eta}{t}\,{\rm Id}-A(0)\Bigr)^{-1}u_0\Bigr\|_V
\\[4pt]
\le&\frac{c}{t}\,e^{-\Re e\eta}\,\frac{1}{(1+\frac{\eta}{t})^{\frac{1-\gamma}{2}}}\,
\omega(t)\,\frac{1}{1+\frac{\eta}{t}}\,\|u_0\|_V
\\[4pt]
\le&\frac{c}{t}\,e^{-|\eta|\cos\vartheta}\,\frac{1}{(1+\frac{\eta}{t})^{\frac{1-\gamma}{2}}}
\,\omega(t)\,\frac{1}{(1+\frac{\eta}{t})^{1/2+\gamma/4}}\,\|u_0\|_V
\\[4pt]
\le& c\,e^{-|\eta|\cos\vartheta}\,t^{-\gamma/4}\omega(t)\,\frac{1}{|\eta|^{1-\gamma/4}}
\,\|u_0\|_V
\\[4pt]
\le& c\,\frac{e^{-|\eta|\cos\vartheta}}{|\eta|^{1-\gamma/4}}\,
\bigl(t^{-\gamma/2}\omega(t)\bigr)\,t^{\gamma/4}\,\|u_0\|_V.
\end{align*}
Since $r\mapsto \frac{e^{-r\cos\theta}}{r^{1-\gamma/4}}$ is integrable on 
$(0,\infty)$ and
$$
t\mapsto e^{-\eta t}(\eta{\rm Id}-tA(t))^{-1}(A(t)-A(0))(\eta{\rm Id}-tA(0))^{-1}u_0\in V
$$
is continuous on $(0,T]$
we may apply Lebesgue's dominated convergence theorem. 
Therefore we obtain the continuity of $t\mapsto e^{-tA(t)}u_0-e^{-tA(0)}u_0\in V$ 
on $(0,T]$. It remains to prove the continuity at $0$. Using the formula \eqref{PMG}, 
thanks to \eqref{1.6} for $\ell=1$ and \eqref{1.2} for $\ell=\gamma$,we obtain 
the following estimate
$$
\|e^{-tA(t)}u_0-e^{-tA(0)}u_0\|_V\le C\,\omega(t)\int_0^\infty
\frac{1}{(1+r)^{\frac{3-\gamma}{2}}}\,{\rm d}r
$$
where we have used that $|e^{-t\lambda}|\le 1$ for all $\lambda\in\Gamma$. Since 
$\omega(t)\longrightarrow 0$ as $t\to 0$, this proves that 
$t\mapsto e^{-tA(t)}u_0-e^{-tA(0)}u_0\in V$ is continuous on $[0,T]$, 
and ultimately that $u_1$ is continuous on $[0,T]$.

\medskip

\noindent
{\tt Step 2:} We claim that $u_2\in{\mathscr{C}}([0,T];V)$. The embedding
$$
H^1(0,T;H)\cap L^2(0,T;D(A(0)))\hookrightarrow {\mathscr{C}}([0,T];V)
$$
(recall that ${\mathfrak{a}}(0;\cdot,\cdot)$ has the square root property so that 
$V=D(A(0)^{\frac{1}{2}})$), together with the maximal regularity property in 
the autonomous case \eqref{maxreg-aut} imply that
$$
t\mapsto \int_0^t e^{-(t-s)A(0)}f(s)\,{\rm d}s\in {\mathscr{C}}([0,T];V).
$$
It suffices to prove now that
$$
\phi:t\mapsto \int_0^t e^{-(t-s)A(t)}f(s)\,{\rm d}s-\int_0^t e^{-(t-s)A(0)}f(s)\,{\rm d}s
\in {\mathscr{C}}([0,T];V).
$$
For every $t\in [0,T]$ we have
$$
\phi(t)=\int_0^t \frac{1}{2\pi i}\int_\Gamma e^{-(t-s)\lambda}
(\lambda{\rm Id}-{\mathcal{A}}(t))^{-1}({\mathcal{A}}(t)-{\mathcal{A}}(0))
(\lambda{\rm Id}-A(0))^{-1}f(s)\,{\rm d}\lambda\,{\rm d}s.
$$
This integral is convergent in $V$. Indeed, by \eqref{1.5} for $\ell=1$ and
\eqref{1.2} for $\ell=\gamma$,
\begin{align*}
&\bigl\|e^{-(t-s)\lambda}(\lambda{\rm Id}-{\mathcal{A}}(t))^{-1}
({\mathcal{A}}(t)-{\mathcal{A}}(0))(\lambda{\rm Id}-A(0))^{-1}f(s)\bigr\|_V
\\[4pt]
\le& c\,e^{-(t-s)|\lambda|\cos\vartheta}\,\frac{1}{(1+|\lambda|)^{\frac{1-\gamma}{2}}}\,
\omega(t) \frac{1}{(1+|\lambda|)^{1/2}}\,\|f(s)\|_H
\\[4pt]
\le&c\,e^{-(t-s)|\lambda|\cos\vartheta}\,\frac{\omega(t)}{(1+|\lambda|)^{1-\gamma/2}}
\,\|f(s)\|_H
\end{align*}
and the function 
$(s,r)\mapsto \frac{e^{-(t-s)r\cos\vartheta}}{(1+r)^{1-\gamma/2}}\|f(s)\|_H$
is integrable on $[0,t]\times[0,+\infty)$. We can then apply Fubini's theorem and 
obtain the following representation for $\phi$
$$
\phi(t)= \frac{1}{2\pi i}\int_\Gamma 
(\lambda{\rm Id}-A(t))^{-1}(A(t)-A(0))(\lambda{\rm Id}-A(0))^{-1}
\Bigl(\int_0^t e^{-(t-s)\lambda} f(s)\,{\rm d}s\Bigr)\,{\rm d}\lambda.
$$
The following two facts are the keys to prove the continuity of $\phi$: 
\begin{itemize}
\item[-]
$t\mapsto \int_0^t e^{-(t-s)\lambda} f(s)\,{\rm d}s\in {\mathscr{C}}([0,T];H)$ and for 
all $\lambda\in \Gamma\setminus\{0\}$,
$$
\Bigl\|\int_0^t e^{-(t-s)\lambda} f(s)\,{\rm d}s\Bigr\|_H
\le \frac{1}{\sqrt{2|\lambda|\cos\vartheta}}\,\|f\|_{L^2(0,T;H)};
$$
\item[-]
$t\mapsto(\lambda{\rm Id}-A(t))^{-1}(A(t)-A(0))(\lambda{\rm Id}-A(0))^{-1}\in 
{\mathscr{C}}([0,T];{\mathscr{L}}(H,V))$ thanks to \eqref{cont2} and \eqref{cont3}
and for all $\lambda\in\Gamma$,
$$
\bigl\|(\lambda{\rm Id}-A(t))^{-1}
(A(t)-A(0))(\lambda{\rm Id}-A(0))^{-1}\bigr\|_{{\mathscr{L}}(H,V)}
\le \frac{c\,\omega(t)}{(1+|\lambda|)^{1-\gamma/2}}.
$$
\end{itemize}
Since $r\mapsto \frac{1}{\sqrt{r}(1+r)^{1-\gamma/2}}\in L^1(0,\infty)$ we can apply 
Lebesgue's dominated convergence theorem and we obtain that 
$\phi\in {\mathscr{C}}([0,T];V)$ and
$$
\|\phi(t)\|_V\le c\,\omega(t)\Bigl(\int_0^\infty \frac{1}{\sqrt{r}(1+r)^{1-\gamma/2}}\,
{\rm d}r\Bigr)\|f\|_{L^2(0,T;H)},
$$
which proves the claim.

\medskip

\noindent
{\tt Step 3:}
We conclude in a similar way as in Step~3 of the proof of 
Theorem~\ref{maxreg-nonaut}. We define for $h\in{\mathscr{C}}([0,T];V)$
\begin{align*}
(Ph)(t):=&\int_0^te^{-(t-s)A(t)}({\mathcal{A}}(t)-{\mathcal{A}}(s))h(s)\,{\rm d}s
\\[4pt]
=&A(t)^{-1/2}\int_0^tA(t)^{1/2}e^{-(t-s)A(t)}
({\mathcal{A}}(t)-{\mathcal{A}}(s))h(s)\,{\rm d}s.
\end{align*}
Thanks to \eqref{cont2} and \eqref{cont5}, $Ph$ is continuous
on $[0,T]$ with values in $V$ by Lebesgue's dominated convergence theorem.
For all $t,s\in[0,T]$, $t>s$, the estimate \eqref{1.12} for $\ell=\gamma$,
the fact that $(e^{-\sigma A(t)})_{\sigma\ge0}$ is a holomorphic semigroup
in $V$ and the property \eqref{omega} give
$$
\bigl\|A(t)^{1/2}e^{-(t-s){\mathcal{A}}(t)}({\mathcal{A}}(t)-{\mathcal{A}}(s))h(s)\bigr\|_V
\le \frac{c\omega(t-s)}{(t-s)^{1+\gamma/2}}\,\|h(s)\|_V.
$$
Let $\varepsilon>0$. As in Step~3 of the proof of Theorem~\ref{maxreg-nonaut},
Replacing $A(t)$ by $A(t)+\mu{\rm Id}\,$ we may assume that
$\|A(t)^{-1/2}\|_{{\mathscr{L}}(V)}\le\varepsilon$. Therefore, we have
$$
\sup_{t\in[0,T]}\|(Ph)(t)\|_V\le 
c\varepsilon\Bigl(\int_0^T\frac{\omega(\sigma)}{\sigma^{1+\gamma/2}}
\,{\rm d}\sigma\Bigr)\sup_{t\in[0,T]}\|h(t)\|_V.
$$
Now, thanks to \eqref{omegaL1}, choosing $\varepsilon$ small enough we 
can arrange that $\|P\|_{{\mathscr{L}}({\mathscr{C}}([0,T];V))}\le \frac{1}{2}$. 
Thus ${\rm Id}\,-P$ is invertible in ${\mathscr{L}}\bigl({\mathscr{C}}([0,T];V)\bigr)$.
We have, by definition of $u_1$ and $u_2$, that $u-Pu=u_1+u_2$. 
Since we have proved in Step~1 and Step~2 that
$u_1+u_2\in {\mathscr{C}}([0,T];V)$, it shows that
$u=({\rm Id}\,-P)^{-1}(u_1+u_2)\in{\mathscr{C}}([0,T];V)$. 
\end{proof}


\section{Non-autonomous Robin boundary conditions}
\label{section5}

Let $\Omega\subset {\mathbb{R}}^N$ be a bounded open set with Lipschitz 
boundary. We denote by $\partial\Omega$ the boundary of $\Omega$ and take 
$L^2(\partial\Omega)$ with respect to the $(N-1)$-dimensional Hausdorff measure. 
There exists a unique bounded operator 
${\rm Tr}:H^1(\Omega)\to L^2(\partial\Omega)$ such that
${\rm Tr}(u)=u_{|_{\partial\Omega}}$ if 
$u\in H^1(\Omega)\cap{\mathscr{C}}(\overline{\Omega})$. We call ${\rm Tr}(u)$ 
the {\em trace} of $u$ and also use the notation $u_{|_{\partial\Omega}}$ for 
$u\in H^1(\Omega)$. Let $\alpha>\frac{1}{4}$ and 
$B:[0,T]\to{\mathscr{L}}(L^2(\partial\Omega))$ be a mapping such that
\begin{equation}
\label{Bcont}
\|B(t)-B(s)\|_{{\mathscr{L}}(L^2(\partial\Omega))}\le c|t-s|^\alpha
\end{equation}
for all $t,s\in[0,T]$ and some $c\ge 0$.
We need some further definitions. If $u\in H^1(\Omega)$ such that 
$\Delta u\in L^2(\Omega)$ and if $b\in L^2(\partial\Omega)$ then we write
$$
\partial_\nu u=b\quad \mbox{if}\quad 
\int_\Omega\Delta u\,\overline{v} +\int_\Omega\nabla u\cdot\overline{\nabla v} 
=\int_{\partial\Omega} b\overline{v},\ 
\mbox{ for all }v\in H^1(\Omega).
$$
This means that we define the {\em normal derivative} $\partial_\nu u$ of $u$
by the validity of Green's formula. Now we can formulate our main result on the 
heat equation with non-autonomous Robin boundary conditions.

\begin{theorem}
\label{maxregRobin}
Let $H=L^2(\Omega)$, $f\in L^2(0,T;H)$, $u_0\in H^1(\Omega)$. Then there 
exists a unique function $u\in H^1(0,T;H)\cap L^2(0,T;H^1(\Omega))$ such that 
$\Delta u\in L^2(0,T;H)$ and 
$$
\left\{\begin{array}{rcll}
\dot u(t)-\Delta u(t)&=&f(t)\quad&t-a.e.
\\[4pt]
\partial_\nu u(t)+B(t)u(t)_{|_{\partial\Omega}}&=&0&t-a.e.
\\[4pt]
u(0)&=&u_0.&
\end{array}\right.
$$
\end{theorem}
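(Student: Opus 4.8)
The plan is to recognize Theorem~\ref{maxregRobin} as a special case of the abstract maximal regularity result, Theorem~\ref{maxreg-nonaut}, by exhibiting the Robin heat problem as a non-autonomous form problem on the correct pair of Hilbert spaces. I would set $H=L^2(\Omega)$ and $V=H^1(\Omega)$, with $V\underset{d}\hookrightarrow H$, and define the non-autonomous form
\begin{equation*}
\fra(t;u,v):=\int_\Omega\nabla u\cdot\overline{\nabla v}\,\mathrm{d}x
+\int_{\partial\Omega}\bigl(B(t)\Tr u\bigr)\overline{\Tr v}\,\mathrm{d}\sigma
+\kappa\int_\Omega u\,\overline{v}\,\mathrm{d}x,
\end{equation*}
where $\kappa>0$ is a shift chosen large enough to guarantee coercivity \eqref{coer} on $V$ (the boundary term is only a lower-order perturbation controlled by the trace inequality, so such a $\kappa$ exists); by Proposition~\ref{shift-inv} this shift is harmless and can be removed at the end. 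I would first verify that $\fra$ satisfies the standing hypotheses \eqref{bdd}, \eqref{coer} and \eqref{measurable}, the boundedness being immediate from continuity of $B(t)$ and of $\Tr$.

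\textbf{The key analytic input} is the time-regularity condition \eqref{omega}. Here the difference $\fra(t;u,v)-\fra(s;u,v)=\int_{\partial\Omega}\bigl((B(t)-B(s))\Tr u\bigr)\overline{\Tr v}\,\mathrm{d}\sigma$ is a pure boundary term, so by \eqref{Bcont} I can estimate it by $c\,|t-s|^\alpha\|\Tr u\|_{L^2(\partial\Omega)}\|\Tr v\|_{L^2(\partial\Omega)}$. The crucial point is to bound the trace norm by an interpolation norm: one has the trace estimate $\|\Tr v\|_{L^2(\partial\Omega)}\le c\,\|v\|_{H^{1/2+\epsilon}(\Omega)}$ for any $\epsilon>0$ on a Lipschitz domain, and since $[H,V]_\gamma=[L^2(\Omega),H^1(\Omega)]_\gamma=H^\gamma(\Omega)=V_\gamma$, I can choose $\gamma$ with $\tfrac12<\gamma<1$ so that $\Tr\in\mathscr{L}(V_\gamma,L^2(\partial\Omega))$. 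This yields
\begin{equation*}
|\fra(t;u,v)-\fra(s;u,v)|\le c\,|t-s|^\alpha\|u\|_V\|v\|_{V_\gamma},
\end{equation*}
which is exactly \eqref{omega} with modulus $\omega(t)=c\,t^\alpha$. The condition $\alpha>\tfrac14$ is what gives room to satisfy $\alpha>\tfrac{\gamma}{2}$: as I let $\gamma\downarrow\tfrac12$, the requirement $\alpha>\tfrac{\gamma}{2}$ approaches $\alpha>\tfrac14$, so any $\alpha>\tfrac14$ permits a valid choice of $\gamma\in(\tfrac12,2\alpha)\cap(\tfrac12,1)$.

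\textbf{The square root property} for $\fra(0;\cdot,\cdot)$ is the remaining hypothesis of Theorem~\ref{maxreg-nonaut}, and I would obtain it from Proposition~\ref{sqrt-perturbed}: the unperturbed Neumann form $\fra_1(u,v)=\int_\Omega\nabla u\cdot\overline{\nabla v}+\kappa\int_\Omega u\overline v$ is bounded, symmetric and coercive, and by the solution of the Kato square root problem on strongly Lipschitz domains (cited in the excerpt via \cite{AT03}) it has the square root property with $D(A^{1/2})=H^1(\Omega)=V$. Since $\fra(0;\cdot,\cdot)-\fra_1$ is again the boundary term, the same interpolation estimate shows $|\fra(0;u,v)-\fra_1(u,v)|\le c\,\|u\|_V\|v\|_{V_\gamma}$, so Proposition~\ref{sqrt-perturbed} transfers the square root property to $\fra(0;\cdot,\cdot)$. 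With all hypotheses verified, Theorem~\ref{maxreg-nonaut} produces a unique $u\in H^1(0,T;H)\cap L^2(0,T;V)$ with $u(t)\in D(A(t))$ a.e.\ solving $\dot u(t)+A(t)u(t)=f(t)$. Finally I must identify $A(t)$ concretely: a standard Green's-formula argument shows that $u\in D(A(t))$ means precisely $-\Delta u\in L^2(\Omega)$ together with the weak Robin condition $\partial_\nu u+B(t)\Tr u=0$ in the sense defined in the excerpt, and $A(t)u=-\Delta u$ (modulo the shift $\kappa$, removed via Proposition~\ref{shift-inv}). \textbf{The main obstacle} I anticipate is the correct bookkeeping of the trace/interpolation estimate on a merely Lipschitz domain — ensuring $\Tr\in\mathscr{L}(V_\gamma,L^2(\partial\Omega))$ for some $\gamma<2\alpha$ — and the verification that membership in $D(A(t))$ unwinds exactly to the stated boundary condition; the abstract machinery then applies essentially automatically.
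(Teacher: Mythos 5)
Your proposal is correct and takes essentially the same route as the paper: the same Robin form, the same trace--interpolation estimate giving \eqref{omega} with $\omega(t)=c\,t^\alpha$ and some $\gamma\in(\tfrac12,2\alpha)\cap(\tfrac12,1)$, a coercivity shift removed afterwards by Proposition~\ref{shift-inv}, an appeal to Theorem~\ref{maxreg-nonaut}, and the identification of $D(A(t))$ via Green's formula. The one point where you go beyond the paper is in explicitly verifying the square root property of $\fra(0;\cdot,\cdot)$ through Proposition~\ref{sqrt-perturbed} — a genuine hypothesis of Theorem~\ref{maxreg-nonaut} that the paper's proof leaves implicit — although invoking \cite{AT03} there is heavier than necessary, since your comparison form is symmetric and its square root property already follows from the Example in Section~\ref{section1} (with $\fra_2=0$).
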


\begin{proof}
Given is $\alpha>\frac{1}{4}$. Central for the proof is a result by Jerison and Kenig
(see \cite[p.\,165]{JeKe95}; see also 
\cite[Theorem~1, Ch. V.1, \S1.1, p.\,103]{JW84}) which says 
that for $0<s<1$ there is a unique bounded linear operator
$$
{\rm Tr}_s:H^{s+1/2}(\Omega)\to H^s(\partial\Omega)
$$
such that ${\rm Tr}_s(u)=u_{|_{\partial\Omega}}$ for all 
$u\in H^{s+1/2}(\Omega)\cap {\mathscr{C}}(\overline{\Omega})$.
In particular, ${\rm Tr}_{1/2}={\rm Tr}$. Moreover 
$H^{1/2}(\partial\Omega)={\rm Tr}\bigl(H^1(\Omega)\bigr)$.
Now choose $0<s<\frac{1}{2}$ such that $\gamma:=s+\frac{1}{2}<2\alpha$. 
Then $\gamma<1$ and
$\alpha>\frac{\gamma}{2}$ as needed in Section~\ref{section3} for 
$\omega(t)=c\,t^\alpha$.
Moreover, for $u\in H^1(\Omega)$ we have
\begin{align*}
\bigl\|B(t)u_{|_{\partial\Omega}}-B(s)u_{|_{\partial\Omega}}\bigr\|_{L^2
(\partial\Omega)}
&\le c\,|t-s|^\alpha\bigl\|u_{|_{\partial\Omega}}\bigr\|_{L^2(\partial\Omega)}
\\[4pt]
&\le c\,|t-s|^\alpha\bigl\|u_{|_{\partial\Omega}}\bigr\|_{H^s(\partial\Omega)}
\\[4pt]
&\le c\,|t-s|^\alpha\|u\|_{H^{s+1/2}(\Omega)}.
\end{align*}
Thus the form
$$
\fra(t;u,v):=\int_\Omega\nabla u\cdot\overline{\nabla v} +
\int_{\partial\Omega}B(t)u_{|_{\partial\Omega}}\,\overline{v}_{|_{\partial\Omega}}
$$
defined on $[0,T]\times H^1(\Omega)\times H^1(\Omega)$ satisfies condition 
\eqref{omega}.

We now choose $\mu>\|B(\cdot)\|_{L^\infty(0,T;{\mathscr{L}}(L^2(\partial\Omega))}$
so that the form 
$$
[0,T]\times H^1(\Omega)\times H^1(\Omega)\ni (t,u,v)\mapsto \fra(t;u,v)
+\mu\int_\Omega u\,\overline{v}
$$
satisfies \eqref{bddcoer}, \eqref{measurable} in addition to \eqref{omega}.
By Theorem~\ref{maxreg-nonaut} this perturbed form has maximal regularity in $H$.
It follows from Proposition~\ref{shift-inv} that also the form $\fra$ has maximal 
regularity in $H$. Denote by $A(t)$ the operator associated with 
$\fra(t;\cdot,\cdot)$ in $H=L^2(\Omega)$. Then
\begin{align*}
D(A(t))&=\bigl\{u\in H^1(\Omega): \Delta u\in L^2(\Omega), 
\partial_\nu u+B(t)u_{|_{\partial\Omega}}=0\bigr\}
\\[4pt]
A(t)u&=-\Delta u,
\end{align*}
as is easy to see using the definition of $\partial_\nu u$ by Green's formula. 
Thus maximal regularity in $H$ is exactly the statement of 
Theorem~\ref{maxregRobin}.
\end{proof}

\noindent
Next we consider a non-linear problem. Keeping the assumptions and settings 
of this section we consider bounded continuous functions 
$\beta_j:{\mathbb{R}}\to{\mathbb{R}}$, $j=0,1,...,N$.

\begin{theorem}
\label{aioli}
Let $u_0\in H^1(\Omega)$, $f\in L^2(0,T;L^2(\Omega)$. Then there exists 
$u\in H^1(0,T;L^2(\Omega))\cap L^2(0,T;H^1(\Omega))$ such that 
$\Delta u\in L^2(0,T;L^2(\Omega))$ and
\begin{equation}
\label{aioli1}
\left\{\begin{array}{rcl}
\displaystyle{\dot u(t)-\Delta u(t) +\sum_{j=1}^N\beta_j(u(t))\partial_ju(t)
+\beta_0(u(t))u(t)}&=&f(t) \quad \mbox{a.e. on }\Omega
\\[4pt]
\partial_\nu u(t)+B(t)u(t)_{|_{\partial\Omega}}&=&0 \quad \mbox{a.e. on }
\partial\Omega
\\[4pt]
u(0)&=&u_0.
\end{array}\right.
\end{equation}
\end{theorem}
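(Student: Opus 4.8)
The plan is to rewrite \eqref{aioli1} as a fixed point equation for the \emph{linear} non-autonomous Robin problem already solved in Theorem~\ref{maxregRobin}, treating the semilinear terms as a perturbation of the right-hand side. Write $H=L^2(\Omega)$, $V=H^1(\Omega)$ and introduce the Nemytskii-type map
$$
F(v)(t):=\sum_{j=1}^N\beta_j(v(t))\,\partial_jv(t)+\beta_0(v(t))\,v(t),\qquad t\in[0,T].
$$
Because each $\beta_j$ is bounded, one has $\|F(v)(t)\|_H\le c\,\|v(t)\|_V$, so $F:L^2(0,T;V)\to L^2(0,T;H)$ is well defined with the \emph{linear growth} bound $\|F(v)\|_{L^2(0,T;H)}\le c\,\|v\|_{L^2(0,T;V)}$. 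Let $S:L^2(0,T;H)\to MR_\fra$ denote the (affine) solution operator furnished by Theorem~\ref{maxregRobin}: $u=S(g)$ is the unique solution of the linear Robin problem with right-hand side $g$ and the fixed initial value $u_0\in H^1(\Omega)$; it is bounded in view of the a~priori estimate following from the Closed Graph Theorem (cf.\ the Remark after Theorem~\ref{maxreg-nonaut}). I then set $\Phi:=S\circ(f-F(\cdot))$ on $X:=L^2(0,T;V)$, so that a fixed point $u=\Phi(u)$ is precisely a solution of \eqref{aioli1}, since $S(g)$ solves $\dot u-\Delta u=g$ with the Robin condition and $u(0)=u_0$.

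First I would verify that $\Phi$ is \emph{continuous}. If $v_n\to v$ in $X$ then $\partial_jv_n\to\partial_jv$ in $L^2(0,T;H)$ and, along a subsequence, $v_n\to v$ a.e.; continuity and boundedness of $\beta_j$ together with dominated convergence give $\beta_j(v_n)\to\beta_j(v)$ in every $L^p$, $p<\infty$. Splitting $\beta_j(v_n)\partial_jv_n-\beta_j(v)\partial_jv=\beta_j(v_n)(\partial_jv_n-\partial_jv)+(\beta_j(v_n)-\beta_j(v))\partial_jv$ yields $F(v_n)\to F(v)$ in $L^2(0,T;H)$, and a Urysohn (subsequence) argument removes the passage to a subsequence. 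Continuity of $\Phi$ follows since $S$ is bounded.

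The crucial and hardest point is that $\Phi$ is \emph{compact}. As $\Phi$ sends bounded sets of $X$ into bounded sets of $MR_\fra$, it suffices to prove the compact embedding $MR_\fra\hookrightarrow\hookrightarrow L^2(0,T;V)$. A bounded family in $MR_\fra$ is bounded in $L^2(0,T;V)$, has $\dot u$ bounded in $L^2(0,T;H)$, and satisfies $A(t)u(t)=-\Delta u(t)$ bounded in $L^2(0,T;H)$, so $u(t)=A(t)^{-1}\psi(t)$ with $\psi$ bounded in $L^2(0,T;H)$. The key is that for each $t$ the operator $A(t)^{-1}:H\to V$ is \emph{compact}: the square root property (propagated to every $t$ by Proposition~\ref{sqrt-perturbed}) gives $A(t)^{-1/2}\in\mathscr L(H,V)$, and since $V=H^1(\Omega)\hookrightarrow\hookrightarrow H=L^2(\Omega)$ by Rellich, $A(t)^{-1/2}:H\to H$ is compact, whence $A(t)^{-1}=A(t)^{-1/2}A(t)^{-1/2}:H\to V$ is compact. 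Combining the uniform compactness of this family, its continuity in $t$ (Lemma~\ref{contprop}), and the time-equicontinuity coming from the bound on $\dot u$, a time-dependent Aubin--Lions--Simon argument delivers relative compactness in $L^2(0,T;V)$. I expect the management of the $t$-dependence of $A(t)^{-1}$ to be the main technical obstacle, and it is exactly here that the square root property is indispensable: the $t$-independent space $\{w\in H^1(\Omega):\Delta w\in L^2(\Omega)\}$ is \emph{not} compactly embedded in $H^1(\Omega)$, so the Robin boundary condition encoded in $D(A(t))$ cannot be discarded.

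It remains to bound, uniformly in $\lambda\in[0,1]$, any $u\in X$ solving $u=\lambda\Phi(u)$; such a $u$ lies in $MR(V,V')$ and satisfies $\dot u+{\mathcal A}(t)u=\lambda\bigl(f-F(u)\bigr)$ with $u(0)=\lambda u_0$. Testing with $u$, using $\tfrac12\tfrac{\rm d}{{\rm d}t}\|u\|_H^2=\Re\langle\dot u,u\rangle$, the coercivity \eqref{coer}, the bound $\|F(u)\|_H\le c\,\|u\|_V$ and Young's inequality, one obtains
$$
\tfrac{\rm d}{{\rm d}t}\|u(t)\|_H^2+\delta\|u(t)\|_V^2\le c\,\|u(t)\|_H^2+\|f(t)\|_H^2,
$$
so Gronwall's lemma bounds $\|u\|_{\mathscr C([0,T];H)}$ and then, after integration, $\|u\|_{L^2(0,T;V)}$ by a constant depending only on $\|u_0\|_H$, $\|f\|_{L^2(0,T;H)}$ and $T$, independently of $\lambda$. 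The Leray--Schauder (Schaefer) fixed point theorem then produces a fixed point $u=\Phi(u)\in MR_\fra\subset{\mathscr C}([0,T];V)$. By construction $u\in H^1(0,T;H)\cap L^2(0,T;H^1(\Omega))$, $\Delta u\in L^2(0,T;H)$, $u$ satisfies the Robin condition $\partial_\nu u+B(\cdot)u_{|_{\partial\Omega}}=0$ as realized in Theorem~\ref{maxregRobin}, and $u$ solves \eqref{aioli1}, which is the claim.
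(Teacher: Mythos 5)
Your overall route is genuinely different from the paper's and is viable in principle, but as written it has one real gap, located exactly where you flagged ``the main technical obstacle'': the compact embedding $MR_\fra\hookrightarrow\hookrightarrow L^2(0,T;V)$. You reduce it to a ``time-dependent Aubin--Lions--Simon argument'' for the moving family of domains $D(A(t))$, but no such off-the-shelf theorem exists: classical Aubin--Lions needs a fixed triple $X_0\hookrightarrow\hookrightarrow X_1\hookrightarrow X_2$, and Simon's criterion applied with middle space $B=V$ needs translation equicontinuity in $V$, which the bound on $\dot u$ in $L^2(0,T;H)$ does not provide. So the compactness of $\Phi$ --- the heart of any Schaefer/Leray--Schauder argument --- is asserted rather than proven. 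The embedding you need is nevertheless true, and the fix is elementary; it needs neither compactness of $A(t)^{-1}$, nor the square root property, nor Lemma~\ref{contprop}. For $v\in D(A(t))$, (quasi-)coercivity and Cauchy--Schwarz give
$$
\delta\|v\|_V^2\le \Re\,\fra(t;v,v)+\mu\|v\|_H^2
=\Re\,\bigl((A(t)+\mu)v\,\big|\,v\bigr)_H
\le \|A(t)v\|_H\|v\|_H+\mu\|v\|_H^2 .
$$
Hence, for a sequence $(u_n)$ bounded in $MR_\fra$ (so that $u_n(t)\in D(A(t))$ a.e.\ and $A(\cdot)u_n(\cdot)$ is bounded in $L^2(0,T;H)$),
$$
\delta\|u_n-u_m\|_{L^2(0,T;V)}^2
\le \bigl\|A(\cdot)(u_n-u_m)\bigr\|_{L^2(0,T;H)}\,\|u_n-u_m\|_{L^2(0,T;H)}
+\mu\|u_n-u_m\|_{L^2(0,T;H)}^2 ,
$$
and since ordinary Aubin--Lions (boundedness in $L^2(0,T;V)$, derivatives bounded in $L^2(0,T;H)$, $V=H^1(\Omega)\hookrightarrow\hookrightarrow H=L^2(\Omega)$ by Rellich) already yields a subsequence converging in $L^2(0,T;H)$, that subsequence is Cauchy in $L^2(0,T;V)$. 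With this lemma inserted, the rest of your argument is sound: the continuity of the Nemytskii map, the $\lambda$-uniform Gronwall bound (where you should invoke quasi-coercivity of the Robin form, $\Re\,\fra(t;u,u)+\mu\|u\|_H^2\ge\delta\|u\|_V^2$, rather than \eqref{coer} itself --- the extra term is absorbed by Gronwall), and Schaefer's theorem.

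It is worth seeing why the paper never meets this difficulty. Instead of treating all of $F(u)$ as a right-hand side, the paper freezes only the coefficients: for $w\in L^2(0,T;L^2(\Omega))$ it perturbs the form by $\fra_2^w(t;u,v)=\int_\Omega\sum_{j}\beta_j(w(t))\partial_ju\,v+\int_\Omega\beta_0(w(t))u\,v$, keeps $\partial_ju$ inside the operator, and applies Theorem~\ref{maxreg-nonaut} to $\fra+\fra_2^w$, which satisfies \eqref{omega} with a constant independent of $w$. The resulting map $T:w\mapsto u$ then acts on the big space $L^2(0,T;L^2(\Omega))$, its range is a bounded subset of $E=H^1(0,T;L^2(\Omega))\cap L^2(0,T;H^1(\Omega))$ uniformly in $w$ (because the $\beta_j$ are bounded), and ordinary Aubin--Lions gives $E\hookrightarrow\hookrightarrow L^2(0,T;L^2(\Omega))$; Schauder's theorem then applies directly, with no a priori estimate and no homotopy. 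Your difficulty is the price of putting $F$ entirely on the right-hand side, which forces the fixed-point space to be one in which $\nabla v$ makes sense and therefore forces compactness in $L^2(0,T;V)$ rather than in $L^2(0,T;H)$.
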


\begin{proof}
We let $\fra(t;.,.):H^1(\Omega)\times H^1(\Omega)\to{\mathbb{C}}$  
be defined as before. Given $w\in L^2(0,T;L^2(\Omega))$ we define 
the form 
$\fra_2^w:[0,T]\times H^1(\Omega)\times H^1(\Omega)\to{\mathbb{C}}$ by
$$
\fra_2^w(t;u,v)=\int_\Omega\sum_{j=1}^N\beta_j(w(t))\partial_ju \,v
+\int_\Omega \beta_0(w(t))u\,v, \quad u\in H^1(\Omega), v\in L^2(\Omega).
$$
Then ${\mathfrak{a}}+{\mathfrak{a}}_2^w$ satisfies \eqref{omega} with 
$\gamma=2\alpha$ and with a constant $c$ which does not depend on 
$w\in L^2(0,T;L^2(\Omega))$. Thus by 
Theorem~\ref{maxreg-nonaut}, there exists a unique solution 
$u$ belonging to the space 
$$
E:=H^1(0,T;L^2(\Omega))\cap L^2(0,T;H^1(\Omega))
$$ 
with $\Delta u\in L^2(0,T;L^2(\Omega))$ of the problem
$$
\left\{\begin{array}{rcl}
\displaystyle{\dot u(t)-\Delta u(t) +\sum_{j=1}^N\beta_j(w(t))\partial_ju(t)+
\beta_0(w(t))u(t)}
&=&f(t) \quad \mbox{a.e. on }\Omega
\\[4pt]
\partial_\nu u(t)+B(t)u(t)_{|_{\partial\Omega}}&=&0 \quad \mbox{a.e. on }
\partial\Omega
\\[4pt]
u(0)&=&u_0.
\end{array}\right.
$$
We define $Tw:=u$. Then $T:L^2(0,T;L^2(\Omega))\to L^2(0,T;L^2(\Omega))$ is 
a continuous mapping (as is easy to see). Moreover, $TL^2(0,T;L^2(\Omega))$ is 
a bounded subset of $E$. This follows from Theorem~\ref{maxreg-nonaut}. 
Since the embedding of $H^1(\Omega)$ into $L^2(\Omega)$ is compact (recall that 
$\Omega$ is bounded), it follows from the Lemma of Aubin-Lions that the 
embedding of $E$ into $L^2(0,T;L^2(\Omega))$ is compact as well. It follows from 
Schauder's Fixed Point Theorem that $T$ has a fixed point $u$. 
This function $u$ solves the problem.
\end{proof}

\section*{Aknowledgements}

We are most grateful to El Maati Ouhabaz and Dominik Dier for fruitful and
enjoyable discussions. We would like to thank the anonymous referee who helped
to improve Section~4.


{\small
\bibliographystyle{amsplain}

}
\end{document}